\numberwithin{equation}{section}
\newtheorem{Theorem}{Theorem}[section]
\newtheorem{Corollary}[Theorem]{Corollary}
\newtheorem{Lemma}[Theorem]{Lemma}
\newtheorem{Proposition}[Theorem]{Proposition}
{ \theoremstyle{definition}
\newtheorem{Definition}[Theorem]{Definition}
\newtheorem{Example}[Theorem]{Example}
\newtheorem{Remark}[Theorem]{Remark} }
\newcommand{\IC}{\mathbb{C}}
\newcommand{\IR}{\mathbb{R}}
\newcommand{\XX}{\mathfrak{X}}
\def \g {\mathfrak{a}}
\def \g {\mathfrak{g}}
\def \h {\mathfrak{h}}
\def \A {{\mathcal{A}}}
\def \P {{\mathcal{P}}}
\def \Q {{\mathcal{Q}}}
\DeclareMathOperator{\Ad}{Ad}
\DeclareMathOperator{\Hom}{Hom}
\DeclareMathOperator{\diag}{diag}
\DeclareMathOperator{\Mix}{Mix}
\DeclareMathOperator{\Fus}{Fus}
\def \sV {{\scriptscriptstyle V}}
\def \sY {{\scriptscriptstyle Y}}
\def \sGam {{\scriptscriptstyle \Gamma}}
\newcommand{\calpha}{\check{\alpha}}
\newcommand{\la}{\langle}
\newcommand{\ra}{\rangle}
\newcommand{\del}{\partial}
\newcommand{\piY}{{\pi_{\scriptscriptstyle Y}}}
\begin{document}

\allowdisplaybreaks

\newcommand{\arXivNumber}{1610.01782}

\renewcommand{\PaperNumber}{063}

\FirstPageHeading

\ShortArticleName{The Fock--Rosly Poisson Structure as Def\/ined by a Quasi-Triangular $r$-Matrix}

\ArticleName{The Fock--Rosly Poisson Structure\\ as Def\/ined by a Quasi-Triangular $\boldsymbol{r}$-Matrix}

\Author{Victor MOUQUIN}

\AuthorNameForHeading{V.~Mouquin}

\Address{University of Toronto, Toronto ON, Canada}
\Email{\href{mailto:mouquinv@math.toronto.edu}{mouquinv@math.toronto.edu}}

\ArticleDates{Received March 26, 2017, in f\/inal form August 01, 2017; Published online August 09, 2017}

\Abstract{We reformulate the Poisson structure discovered by Fock and Rosly on moduli spaces of f\/lat connections over marked surfaces in the framework of Poisson structures def\/ined by Lie algebra actions and quasitriangular $r$-matrices, and we show that it is an example of a mixed product Poisson structure associated to pairs of Poisson actions, which were studied by J.-H.~Lu and the author. The Fock--Rosly Poisson structure corresponds to the quasi-Poisson structure studied by Massuyeau, Turaev, Li-Bland, and \v{S}evera under an equivalence of categories between Poisson and quasi-Poisson spaces.}

\Keywords{f\/lat connections; Poisson Lie groups; $r$-matrices; quasi-Poisson spaces}

\Classification{53D17; 53D30; 17B62}

\section{Introduction}\label{intro}

Let $G$ be a connected complex Lie group with Lie algebra $\g$, and let $s \in S^2\g$ be a $\g$-invariant element. The moduli space of f\/lat $G$-connections on a Riemann surface $\Sigma$ has the well known~\cite{AB} canonical Atiyah--Bott Poisson structure. If one ``marks'' f\/initely many points $V \subset \del \Sigma$ in the boundary of $\Sigma$ and consider only gauge transformations which are trivial over $V$, Fock and Rosly have constructed in \cite{FR2, FR1} a Poisson structure $\pi_r$ on the corresponding moduli space~$\A(\Sigma, V)$, which depends on a quasitriangular $r$-matrix $r_v$ for every $v \in V$ such that all $r_v$'s have symmetric part~$s$. Under the quotient by the group of lattice gauge transformations~$G^V$, $\pi_r$ descends to the Atiyah--Bott Poisson structure on the full moduli space, and quantizations of~$\pi_r$ play a~fundamental role in quantum gravity (see~\cite{physics1} and references therein).

The bivector f\/ield $\pi_r$ was given in~\cite{FR2, FR1} by a formula, of which the proof that it def\/ines a~Poisson structure was left as a computation. In this paper, as an application of the methods developed in \cite{Lu-Mou:mixed}, we give a simpler and more conceptual proof that $\pi_r$ is a Poisson structure, by viewing it in the framework of Poisson structures {\it defined by a Lie algebra action and a~quasitriangular $r$-matrix}. Recall that given an action $\rho\colon \h \to \XX^1(Y)$ of a Lie algebra $\h$ on a~mani\-fold~$Y$ and a~quasitriangular $r$-matrix $r \in \h \otimes \h$, if the pushforward $\piY= \rho(r)$ is a~bivector f\/ield, it is automatically Poisson, and one says that $\piY$ is def\/ined by the action $\rho$ and the $r$-mat\-rix~$r$.

More precisely, given an oriented skeleton $\Gamma$ of a marked surface $(\Sigma, V)$, one has a natural action $\sigma_\sGam$ of the Lie algebra $\g^{\sGam_{1/2}}$ on $\A(\Sigma, V)$, where $\Gamma_{1/2}$ is the set of half edges of $\Gamma$, and a~quasitriangular $r$-matrix $r_\sGam \in \g^{\sGam_{1/2}} \otimes \g^{\sGam_{1/2}}$, such that $\sigma_\sGam(r_\sGam)$ is a~Poisson structure. Both $\sigma_\sGam$ and~$r_\sGam$ depend on~$\Gamma$, but one proves that $\pi_r = \sigma_\sGam(r_\sGam)$ does not.

Marked surfaces can be {\it fused} at their marked points. One also has the notion introduced in~\cite{Lu-Mou:mixed} of {\it fusion of Poisson spaces} admitting a~Poisson action by a quasitriangular Lie bialgebra, and we show that the Poisson structures on the associated moduli spaces correspond under these constructions. In particular, the Fock--Rosly Poisson structure is an example of a {\it mixed product Poisson structure associated to pairs of Poisson actions} introduced in~\cite{Lu-Mou:mixed}.

On the other hand, $\A(\Sigma, V)$ carries a canonical {\it quasi-Poisson structure} $Q_s$, f\/irst discovered in \cite{mass-turaev} when $V$ is a singleton, and further studied in \cite{LBS1} for general $V$'s, which can be obtained by reduction of the canonical symplectic structure on the inf\/inite-dimensional af\/f\/ine space of $G$-connections on $\Sigma$. Quasi-Poisson manifolds were introduced in \cite{Anton-Yvette, AA} as a way to obtain a~unif\/ied picture of various notions of moment maps. It is shown in \cite{Anton-Yvette, David-Severa:quasi-Hamiltonian-groupoids, Lu-Mou:mixed} (see also Section~\ref{subsec-quasi}) that one has an equivalence of categories between the category of $(\g, \phi_s)$ quasi-Poisson spaces and the category of $(\g, r)$ Poisson spaces, where $r$ is a quasitriangular $r$-matrix whose symmetric part is $s$, and $\phi_s \in \wedge^3 \g$ is the Cartan element associated to $s$ (see \eqref{eq-cyb}). We show in this paper that $\pi_r$ corresponds to $Q_s$ under this equivalence of categories.

An interesting project would be to develop a theory of quantizations of Poisson structures def\/ined by actions of Lie algebras and quasitriangular $r$-matrices. This paper provides the setting to study the quantization of the Fock--Rosly Poisson structure from this point of view.

The paper is organized as follows. In Section~\ref{sec-quasitri} we recall the basic facts on quasitriangular $r$-matrices which will be needed later, and in Section~\ref{sec-graphs&surfaces} we recall the fusion of ciliated graphs and marked surfaces. The Poisson structure $\pi_r$ on the moduli space $\A(\Sigma, V)$ is def\/ined in Section~\ref{sec-FR-Poiss}, where we prove that it is independent of the choice of an oriented skeleton of $(\Sigma, V)$, and that fusion of marked surfaces corresponds to fusion of the associated Poisson structures. In Section~\ref{sec-quasipoisson}, the equivalence between $\pi_r$ and the quasi-Poisson structure $Q_s$ under an equivalence of categories between Poisson and quasi-Poisson spaces is proven.

\subsection{Notation} \label{nota}

Throughout this paper, vector spaces are understood to be over $\IR$ or $\IC$.

If $\Gamma$ is a f\/inite set and $\{X_\gamma \colon \gamma \in \Gamma\}$ a family of sets indexed by $\Gamma$, for $x \in \prod_{\gamma \in \Gamma} X_\gamma$ and $\gamma \in \Gamma$, $x_\gamma \in X_\gamma$ denotes the $\gamma$-component of $x$. If $\{V_\gamma\colon \gamma \in \Gamma\}$ is a family of groups and $v \in V_\gamma$, $(v)_\gamma \in \bigoplus_{\gamma \in \Gamma}V_\gamma$ is the image of $v$ under the embedding of $V_\gamma$ into $\bigoplus_{\gamma \in \Gamma}V_\gamma$ as the $\gamma$-component. When the $V_\gamma$'s are vector spaces, we extend this notation to tensor powers. Namely, for an integer $k \geq 1$ and $v \in V_\gamma^{\otimes k}$, $(v)_\gamma$ is the image of $v$ under the embedding of $V_\gamma^{\otimes k}$ into $(\bigoplus_{\gamma \in \Gamma}V_\gamma)^{\otimes k}$ as the $\gamma$-component.

If $\rho\colon Y \times G \to Y$ (resp.\ $\lambda\colon G \times Y \to Y$) is a right (resp.\ left) action of a Lie group $G$ on a~mani\-fold~$Y$, we will denote by $\rho\colon \g \to \XX^1(Y)$ (resp.\ $\lambda\colon \g \to \XX^1(Y)$) the induced right (resp.\ left) Lie algebra action of the Lie algebra~$\g$ of $G$ on $Y$. If $x \in \g^{\otimes k}$, $k \geq 1$, we denote respectively by~$x^R$ and~$x^L$ the right and left invariant $k$-tensor f\/ield on $G$ whose value at the identity $e \in G$ is~$x$.

Lie bialgebras will be denoted as pairs $(\g, \delta_\g)$, where $\g$ is a Lie algebra, and $\delta_\g\colon \g \to \wedge^2 \g$ the cocycle map. Recall that $\delta_\g$ satisf\/ies
\begin{gather*}
\delta_\g([x,y]) = [x, \delta_\g(y)] + [\delta_\g(x), y], \qquad x,y \in \g,
\end{gather*}
and that the dual map $\delta_\g^*\colon {\wedge}^2 \g^* \to \g^*$ is a Lie bracket on $\g^*$.

\section[Poisson structures def\/ined by quasitriangular $r$-matrices]{Poisson structures def\/ined by quasitriangular $\boldsymbol{r}$-matrices} \label{sec-quasitri}

We recall in this section basic facts about quasitriangular $r$-matrices and refer to \cite{Lu-Mou:mixed} for a~detailed exposition on Poisson Lie groups and Lie bialgebras.

\subsection[Quasitriangular $r$-matrices]{Quasitriangular $\boldsymbol{r}$-matrices}

Let $\g$ be a f\/inite-dimensional Lie algebra, and let $r = s + \Lambda \in \g \otimes \g$, with $s \in (S^2\g)^\g$ and $\Lambda \in \wedge^2 \g$. One says that $r$ is a {\it quasitriangular $r$-matrix} on $\g$ if it satisf\/ies the {\it classical Yang--Baxter equation}
\begin{gather*}
[\Lambda, \Lambda] + \phi_s = 0, %\label{modifiedCYB}
\end{gather*}
where $[\, , \,]\colon \wedge^k \g \otimes \wedge^l \g \to \wedge^{k+l-1} \g$ is the Schouten bracket on the exterior powers of a Lie algebra, and $\phi_s \in \wedge^3 \g$ is def\/ined by
\begin{gather}
\phi_s(\xi, \eta, \zeta) = 2 \la \xi, [s^\#(\eta), \, s^\#(\zeta)]\ra, \qquad \xi, \eta, \zeta \in \g^*, \label{eq-cyb}
\end{gather}
where $s^\sharp\colon \g^* \to \g$ is given by $\la s^\sharp(\xi), \eta \ra = s(\xi, \eta)$, $\xi, \eta \in \g^*$. If $r = \sum_i x_i \otimes y_i \in \g \otimes \g$ is a~quasitriangular $r$-matrix, it def\/ines a Lie bialgebra structure
\begin{gather*}
\delta_r\colon \ \g \to \g \wedge \g, \qquad \delta_r(x) = \sum_i [x, x_i] \otimes y_i + x_i \otimes [x, y_i], \qquad x \in \g,
\end{gather*}
and one calls the pair $(\g, r)$ a {\it quasitriangular Lie bialgebra}.

Let $(\g, \delta_\g)$ be a Lie bialgebra and $(Y, \piY)$ a Poisson manifold. A (right) Poisson action of~$(\g, \delta_\g)$ on~$(Y, \piY)$ is a Lie algebra morphism $\rho\colon \g \to \XX^1(Y)$ satisfying
\begin{gather*}
[\rho(x), \piY] = \rho(\delta_\g(x)), \qquad x \in \g,
\end{gather*}
and one also says that $(Y, \piY, \rho)$ is a right $(\g, \delta_\g)$-Poisson space.

Let $\g$ be a f\/inite-dimensional Lie algebra, $Y$ a manifold and $\rho\colon \g \to \XX^1(Y)$ a right action of~$\g$ on~$Y$. For $r = \sum_i x_i \otimes y_i \in \g \otimes \g$, one has the 2-tensor f\/ield
\begin{gather*}
\rho(r) = \sum_i \rho(x_i) \otimes \rho(y_i) \in \XX^1(Y) \otimes \XX^1(Y),
\end{gather*}
and writing $r = s + \Lambda$, with $s \in S^2 \g$ and $\Lambda \in \wedge^2 \g$, it is clear that $\rho(r)$ is a bivector f\/ield on~$Y$ if and only if $\rho(s) = 0$.

\begin{Proposition}[{\cite[Proposition 2.18]{Lu-Mou:mixed}}]\label{Poi-def-r}
If $r$ is a quasitriangular $r$-matrix and if $\rho(r)$ is a~bivector field, it is a Poisson bivector field, and $(Y, \rho(r), \rho)$ is a right $(\g, r)$-Poisson space.
\end{Proposition}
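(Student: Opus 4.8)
The plan is to verify the two assertions — that $\rho(r)$ is Poisson, i.e.\ $[\rho(r),\rho(r)] = 0$, and that $\rho$ intertwines the bracket by $\rho(r)$ with the cocycle $\delta_r$ — by pushing forward the corresponding identities that hold already on the Lie algebra side. The key mechanism is that $\rho \colon \g \to \XX^1(Y)$ is a Lie algebra morphism, so it extends to a morphism of the Schouten (Gerstenhaber) algebras $\rho \colon \wedge^\bullet \g \to \XX^\bullet(Y)$; in particular $\rho([\Lambda,\Lambda]) = [\rho(\Lambda),\rho(\Lambda)]$ and $\rho(\phi_s)$ can be handled explicitly because $\rho(s)=0$.

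First I would observe that since $\rho(r)$ is assumed to be a bivector field we have $\rho(s) = 0$, hence $\rho(r) = \rho(\Lambda)$. Then
\begin{gather*}
[\rho(r),\rho(r)] = [\rho(\Lambda),\rho(\Lambda)] = \rho([\Lambda,\Lambda]) = \rho(-\phi_s) = -\rho(\phi_s),
\end{gather*}
using that $\rho$ is a morphism of Schouten algebras and that $r$ satisfies the classical Yang--Baxter equation $[\Lambda,\Lambda] + \phi_s = 0$. So it remains to see that $\rho(\phi_s) = 0$. This is where the condition $\rho(s) = 0$ is used a second time: writing $s = \sum_j a_j \otimes b_j$ symmetrically, the Cartan element $\phi_s$ can be written (using $s \in (S^2\g)^\g$) in the form $\tfrac14\sum_{j,k}[a_j,a_k]\wedge b_j \wedge b_k$ or an equivalent expression in which each term contains a factor coming from $s$; applying $\rho$ and using $\sum_j \rho(a_j)\wedge \rho(b_j) = \tfrac12\rho(s) = 0$ (or the analogous contraction) forces $\rho(\phi_s) = 0$. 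Hence $[\rho(r),\rho(r)] = 0$ and $\rho(r)$ is Poisson.

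For the second assertion, I would compute $[\rho(x),\rho(r)]$ for $x \in \g$. Again $\rho(r) = \rho(\Lambda)$, and since $\rho$ is a Schouten morphism, $[\rho(x),\rho(\Lambda)] = \rho([x,\Lambda])$, where $[x,\Lambda] \in \wedge^2\g$ is the Schouten bracket in $\wedge^\bullet\g$, which equals $\sum_i [x,x_i]\wedge y_i$ style expression. Comparing with the formula $\delta_r(x) = \sum_i [x,x_i]\otimes y_i + x_i \otimes [x,y_i]$ and again discarding the symmetric part $s$ (on which $\rho$ vanishes), one gets $[\rho(x),\rho(r)] = \rho(\delta_r(x))$, which is exactly the Poisson action condition for the Lie bialgebra $(\g,\delta_r) = (\g,r)$. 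Since $\rho$ is already a Lie algebra morphism, $(Y,\rho(r),\rho)$ is then a right $(\g,r)$-Poisson space.

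The main obstacle is the bookkeeping showing $\rho(\phi_s) = 0$ and the matching of $[x,\Lambda]$ with $\delta_r(x)$ modulo the symmetric part: one must carefully use the $\g$-invariance of $s$ to rewrite $\phi_s$ so that the vanishing of $\rho(s)$ can be invoked, and one must be consistent about the difference between $r = s+\Lambda$ and $\Lambda$ throughout. None of this is deep, but it is the only place where a genuine computation (as opposed to formal functoriality of $\rho$) enters, and it is presumably carried out once and for all in \cite[Proposition 2.18]{Lu-Mou:mixed}.
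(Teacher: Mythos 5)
Your reduction is partly sound: since $\rho(s)=0$ one has $\rho(r)=\rho(\Lambda)$; since $\rho$ is a Lie algebra morphism, its exterior extension intertwines Schouten brackets, so $[\rho(\Lambda),\rho(\Lambda)]=\rho([\Lambda,\Lambda])=-\rho(\phi_s)$, and $[\rho(x),\rho(\Lambda)]=\rho([x,\Lambda])=\rho(\delta_r(x))$ because $[x,s]=0$ by invariance of $s$ (the paper itself gives no proof, citing Lu--Mouquin). The genuine gap is the step $\rho(\phi_s)=0$. Your mechanism --- rewrite $\phi_s$ so that every term ``contains a factor coming from $s$'' and contract against $\rho(s)=0$ --- cannot work. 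A first symptom: $\sum_j\rho(a_j)\wedge\rho(b_j)$ vanishes for the trivial reason that $s$ is symmetric; the actual hypothesis is the vanishing of the symmetric $2$-tensor field $\sum_j\rho(a_j)\otimes\rho(b_j)$, and in $\rho(\phi_s)$ the index $j$ of $a_j$ is entangled with the second summation index through the bracket $[a_j,a_k]$, so no factor $\rho(s)$ can be split off. More decisively, $\rho(s)$ and $\rho(\phi_s)$ at a point $y$ depend only on the linear map $\rho_y\colon\g\to T_yY$, and the implication ``$(\rho_y\otimes\rho_y)(s)=0\Rightarrow\rho_y^{\wedge 3}(\phi_s)=0$'' is false for a general linear map: take $\g=\sl_2\times\sl_2$ with the invariant form $B=(B_0,-2B_0)$, let $s\in (S^2\g)^\g$ be the inverse form of $B$, and let $\k=\{(x,x/\sqrt2)\colon x \in \sl_2\}$. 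Then $\k$ is $3$-dimensional and $B$-isotropic, so $\k^{\perp_B}=\k$ and the quotient map $T\colon\g\to\g/\k$ satisfies $(T\otimes T)(s)=0$ (because $s^\#$ maps the annihilator of $\k$ onto $\k^{\perp_B}=\k$), yet $(T^{\otimes 3})(\phi_s)\neq0$, since $B\big((x,x/\sqrt2),[(y,y/\sqrt2),(z,z/\sqrt2)]\big)=(1-1/\sqrt2)\,B_0(x,[y,z])$ is not identically zero. Hence no purely multilinear manipulation of $\phi_s$ and $s$ can close the argument.

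What saves the proposition is a geometric fact you never invoke: because $\rho$ is an action, the isotropy space $\k_y=\{x\in\g\colon\rho(x)_y=0\}$ is a Lie subalgebra of $\g$ (the bracket of two vector fields vanishing at $y$ again vanishes at $y$), whereas the subspace $\k$ in the example above is not a subalgebra. Pointwise, $\rho(s)_y=0$ says exactly that $s(\rho_y^*\xi,\rho_y^*\eta)=0$ for all $\xi,\eta\in T_y^*Y$, i.e., that $s^\#$ maps the image of $\rho_y^*$ (the annihilator of $\k_y$) into $\k_y$. Therefore, for $\xi,\eta,\zeta\in T_y^*Y$,
\begin{gather*}
\rho(\phi_s)_y(\xi,\eta,\zeta)=\phi_s\big(\rho_y^*\xi,\rho_y^*\eta,\rho_y^*\zeta\big)=2\big\la \rho_y^*\xi,\big[s^\#(\rho_y^*\eta),\,s^\#(\rho_y^*\zeta)\big]\big\ra=0,
\end{gather*}
because the bracket lies in $[\k_y,\k_y]\subseteq\k_y=\ker\rho_y$. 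With this pointwise argument replacing your contraction step, the remainder of your proof (the Schouten functoriality computations and the verification of the Poisson action condition) goes through as written.
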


In the context of Proposition \ref{Poi-def-r}, one says that $\rho(r)$ is a Poisson structure {\it def\/ined by the quasitriangular $r$-matrix $r$ and the action $\rho$}.

Let $\g$ be a Lie algebra and $n \geq 1$ an integer. For any $r = \sum_i x_i \otimes y_i \in \g \otimes \g$, def\/ine $\Mix^n(r) \in \wedge^2 (\g^n)$ by
\begin{gather}
\Mix^n(r) = \sum_{1 \leq j < k \leq n} \big( \Mix^n(r) \big)_{j,k}, \label{Mix^n(r)}
\end{gather}
where
\begin{gather*}
\big( \Mix^n(r) \big)_{j,k} = \sum_i (y_i)_j \wedge (x_i)_k, \qquad 1 \leq j < k \leq n,
\end{gather*}
and for any sign function $\varepsilon\colon \{1, \ldots, n\} \to \{1, -1\}$, let
\begin{gather*}
r^{\varepsilon, n} = (\varepsilon(1)s, \ldots, \varepsilon(n)s) + (\Lambda, \ldots, \Lambda) \in \g^n \otimes \g^n,
\end{gather*}
where $r = s + \Lambda$ with $s \in S^2\g$ and $\Lambda \in \wedge^2 \g$, and let
\begin{gather}
r^{(\varepsilon, n)} = r^{\varepsilon, n} - \Mix^n(r) \in \g^n \otimes \g^n. \label{defn-r^(n)}
\end{gather}

\begin{Theorem}[{\cite[Theorem 6.2]{Lu-Mou:mixed}}] \label{r^(n)}
If $r \in \g \otimes \g$ is a quasitriangular $r$-matrix on $\g$, then for any $n \geq 1$ and any sign function $\varepsilon$, $r^{(\varepsilon, n)}$ is a quasitriangular $r$-matrix on~$\g^n$, and the Lie bialgebra structure
\begin{gather}
\delta_r^{(n)} = \delta_{r^{(\varepsilon, n)}} \label{delta_r^n}
\end{gather}
is independent of $\varepsilon$. Moreover, the map
\begin{gather*}
\diag_n\colon \ (\g, \delta_r) \to \big(\g^n, \delta_r^{(n)}\big), \qquad \diag_n(x) = (x, \ldots, x), \qquad x \in \g, %\label{def_diag_n}
\end{gather*}
is an embedding of Lie bialgebras.
\end{Theorem}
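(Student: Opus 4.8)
The plan is to verify directly that $r^{(\varepsilon,n)} = r^{\varepsilon,n} - \Mix^n(r)$ satisfies the classical Yang--Baxter equation on $\g^n$, and then deduce the remaining assertions. Write $r^{(\varepsilon,n)} = s^{(\varepsilon,n)} + \Lambda^{(\varepsilon,n)}$, where the symmetric part is $s^{(\varepsilon,n)} = (\varepsilon(1)s, \dots, \varepsilon(n)s)$, since $\Mix^n(r)$ and $(\Lambda,\dots,\Lambda)$ are both skew; the skew part is $\Lambda^{(\varepsilon,n)} = (\Lambda,\dots,\Lambda) - \Mix^n(r)$. One first checks that $s^{(\varepsilon,n)} \in (S^2\g^n)^{\g^n}$: this is immediate from $s \in (S^2\g)^\g$ applied componentwise. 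The main computation is then to show $[\Lambda^{(\varepsilon,n)}, \Lambda^{(\varepsilon,n)}] + \phi_{s^{(\varepsilon,n)}} = 0$ in $\wedge^3 \g^n$.

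First I would decompose the Schouten bracket $[\Lambda^{(\varepsilon,n)}, \Lambda^{(\varepsilon,n)}]$ using bilinearity into the pieces $[(\Lambda,\dots,\Lambda),(\Lambda,\dots,\Lambda)]$, the cross terms $-2[(\Lambda,\dots,\Lambda), \Mix^n(r)]$, and $[\Mix^n(r), \Mix^n(r)]$. The diagonal term equals $([\Lambda,\Lambda], \dots, [\Lambda,\Lambda]) = (-\phi_s, \dots, -\phi_s)$ by the CYBE for $r$ on $\g$. On the other hand, $\phi_{s^{(\varepsilon,n)}}$ splits as a sum over the components $j$ of $\phi_{\varepsilon(j)s}$ sitting in the $j$-th slot; since $\phi_s$ is cubic in $s$ and each $\varepsilon(j) \in \{\pm 1\}$, we get $\phi_{\varepsilon(j)s} = \varepsilon(j)^3 \phi_s = \varepsilon(j)\phi_s$. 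Here is the key structural point: the purely-$j$-component part of the full equation is $\varepsilon(j)^2[\Lambda,\Lambda] + \varepsilon(j)\phi_s$ — wait, more carefully, the diagonal Schouten term contributes $[\Lambda,\Lambda]$ in slot $j$ regardless of sign, which does not match $\varepsilon(j)\phi_s$ unless the sign-dependent pieces of the mixed-term expansion correct it. I would therefore organize the calculation by "slot type" of elements of $\wedge^3\g^n$: terms supported on a single component $j$, on a pair $\{j,k\}$, and on a triple $\{j,k,l\}$. For each type, collect all contributions from $[(\Lambda,\dots,\Lambda),(\Lambda,\dots,\Lambda)]$, $-2[(\Lambda,\dots,\Lambda),\Mix^n(r)]$, $[\Mix^n(r),\Mix^n(r)]$, and $\phi_{s^{(\varepsilon,n)}}$, and check they cancel. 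The single-component and pair-component cancellations will use the CYBE for $r$ together with the identity $[\Lambda, s] = 0$ type relations (equivalently the $\g$-invariance of $s$ and the fact that $t = s^\#$ intertwines appropriately), while the triple-component terms should cancel by a Jacobi-type identity among the $\Mix$ blocks. This is the step I expect to be the main obstacle: it is a genuine but finite bookkeeping computation, and the subtle part is tracking how the sign function $\varepsilon$ enters the cross terms so that every $\varepsilon(j)$ ultimately drops out of the validity of the equation even though $r^{(\varepsilon,n)}$ itself depends on $\varepsilon$.

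Once the CYBE is established, $r^{(\varepsilon,n)}$ is by definition a quasitriangular $r$-matrix on $\g^n$, hence determines a Lie bialgebra structure $\delta_{r^{(\varepsilon,n)}}$ via the formula for $\delta_r$ recalled in Section~\ref{sec-quasitri}. For the independence of $\delta_r^{(n)}$ from $\varepsilon$, I would note that changing $\varepsilon$ changes $r^{(\varepsilon,n)}$ only by adding, in each affected slot $j$, the term $(\varepsilon'(j)-\varepsilon(j))s = \pm 2s$ in the symmetric part; since $s$ is $\g$-invariant, such a symmetric $\g^n$-invariant modification does not affect the cocycle $\delta$ (the contribution of a $\g$-invariant symmetric tensor to the formula $\delta_r(x) = \sum_i [x,x_i]\otimes y_i + x_i \otimes [x,y_i]$ vanishes, as is standard and follows from $[x, s] = 0$). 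Hence $\delta_r^{(n)}$ is well defined.

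Finally, for the claim that $\diag_n \colon (\g,\delta_r) \to (\g^n, \delta_r^{(n)})$ is an embedding of Lie bialgebras: injectivity of $\diag_n$ as a Lie algebra map is clear, so it remains to check it intertwines the cocycles, i.e.\ $\delta_r^{(n)}(\diag_n x) = (\diag_n \otimes \diag_n)(\delta_r(x))$ for all $x \in \g$. Using the explicit $r$-matrix formula for $\delta_r^{(n)} = \delta_{r^{(\varepsilon,n)}}$ with, say, the constant sign $\varepsilon \equiv 1$, one computes $\delta_{r^{(1,n)}}(x,\dots,x)$: the contributions of $r^{1,n} = (s+\Lambda, \dots, s+\Lambda)$ give $\sum_j (\delta_r(x))_j$ placed diagonally, which is exactly $(\diag_n\otimes\diag_n)(\delta_r(x))$ after reindexing, while the contributions of the mixed term $\Mix^n(r) = \sum_{j<k}\sum_i (y_i)_j \wedge (x_i)_k$ cancel because $\sum_i [x, y_i]\otimes x_i + y_i \otimes [x, x_i]$ evaluated "across slots $j,k$" telescopes against its transpose — concretely, $[\diag_n x, \Mix^n(r)] = 0$ since $\Mix^n(r)$ is built from $r$ which satisfies $[\diag x, r] = \delta_r(x)$ split diagonally, and the off-diagonal pieces pair up to zero. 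Assembling these pieces yields the intertwining identity and completes the proof.
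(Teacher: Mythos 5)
Note first that the paper itself gives no proof of this statement: it is quoted directly from Lu--Mouquin \cite[Theorem~6.2]{Lu-Mou:mixed}, so your proposal can only be judged on its own merits. Its architecture is sensible (invariance of the symmetric part, CYBE checked by sorting terms according to the slots of $\g^n$ they occupy, $\varepsilon$-independence of the cocycle because adding a $\g^n$-invariant symmetric tensor does not change $\delta$, then the diagonal embedding), and the $\varepsilon$-independence step is correct. But there are two genuine problems. The element $\phi_s$ is \emph{quadratic}, not cubic, in $s$: by \eqref{eq-cyb} it contains $s^\sharp$ twice, so $\phi_{\varepsilon(j)s}=\varepsilon(j)^2\phi_s=\phi_s$ and $\phi_{s^{(\varepsilon,n)}}=(\phi_s,\dots,\phi_s)$ for every sign function. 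Since the skew part $\Lambda_r^{(n)}$ is also $\varepsilon$-independent, the equation to be verified, $[\Lambda_r^{(n)},\Lambda_r^{(n)}]+(\phi_s,\dots,\phi_s)=0$, does not involve $\varepsilon$ at all; the ``mismatch'' you worry about in the single-slot terms is an artifact of the wrong homogeneity, and those terms cancel immediately since $([\Lambda,\Lambda])_j+(\phi_s)_j=0$. The actual content of the theorem --- the cancellation of the two-slot contributions of $-2[(\Lambda,\dots,\Lambda),\Mix^n(r)]$ and $[\Mix^n(r),\Mix^n(r)]$, which needs the invariance of $s$ and the CYBE for $r$ again, and of the three-slot contributions of $[\Mix^n(r),\Mix^n(r)]$, which is the tensor form of the CYBE --- is precisely what you defer as ``bookkeeping'', so the central step is announced rather than proved, and with a misleading guide for carrying it out.

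In the diagonal-embedding step both of your sub-claims are false as stated. The diagonal blocks $(r,\dots,r)$ contribute only $\delta_r(x)$ placed in the slots $(j,j)$, $1\le j\le n$, which is strictly smaller than $(\diag_n\otimes\diag_n)(\delta_r(x))$; and $[\diag_n(x),\Mix^n(r)]$ is not zero in general. What rescues the identity is the invariance of $s$: since $x\cdot s=0$, one has $\sum_i\bigl([x,y_i]\otimes x_i+y_i\otimes[x,x_i]\bigr)=-\delta_r(x)$, so $-[\diag_n(x),\Mix^n(r)]$ produces exactly the off-diagonal components of $(\diag_n\otimes\diag_n)(\delta_r(x))$ sitting in the slots $(j,k)$ with $j\neq k$, and adding the diagonal contribution gives the full right-hand side. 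Thus the intertwining relation does hold, but by the opposite mechanism to the one you describe, and your justification as written would not survive being expanded.
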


For any $r \in \g \otimes \g$ and any sign function $\varepsilon$, denote by $\Lambda_r^{(n)}$ the anti-symmetric part of $r^{(\varepsilon, n)}$. Writing $r = \sum_i x_i \otimes y_i = s + \Lambda$, with $2s = \sum_i (x_i \otimes y_i + y_i \otimes x_i)$ and $2\Lambda = \sum_i x_i \wedge y_i$, one has explicitly
\begin{gather*}
\Lambda_r^{(n)} = (\Lambda, \ldots, \Lambda) - \Mix^n(r) = \frac{1}{2} \sum_{j=1}^n \sum_i (x_i)_j \wedge (y_i)_j - \sum_{1 \leq j < k \leq n} \sum_i (y_i)_j \wedge (x_i)_k. %\label{anti-sym-r^n}
\end{gather*}

The following lemma will be used in the proof of Proposition \ref{coord-quasi}.

\begin{Lemma} \label{lem-diag_nLambda}
Let $r = s + \Lambda \in \g \otimes \g$, with $s \in S^2 \g$ and $\Lambda \in \wedge^2 \g$. Then
\begin{gather*}
\Lambda_r^{(n)} - \diag_n(\Lambda) = - \Mix^n(s). %\label{lam-diag-mix}
\end{gather*}
\end{Lemma}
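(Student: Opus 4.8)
The plan is to compute everything explicitly from the definitions. Recall from \eqref{defn-r^(n)} that $r^{(\varepsilon, n)} = r^{\varepsilon,n} - \Mix^n(r)$, and that $r^{\varepsilon,n} = (\varepsilon(1)s, \ldots, \varepsilon(n)s) + (\Lambda, \ldots, \Lambda)$. The anti-symmetric part of $r^{(\varepsilon,n)}$ is $\Lambda_r^{(n)}$; since $(\varepsilon(1)s, \ldots, \varepsilon(n)s)$ is symmetric, $\Mix^n(r)$ is anti-symmetric (as noted in the excerpt, $\Mix^n(r) = \sum_{j<k}\sum_i (y_i)_j \wedge (x_i)_k$), and $(\Lambda, \ldots, \Lambda)$ is anti-symmetric, we get $\Lambda_r^{(n)} = (\Lambda, \ldots, \Lambda) - \Mix^n(r)$, exactly the formula displayed just before the lemma. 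Meanwhile $\diag_n(\Lambda) = (\Lambda, \ldots, \Lambda)$ viewed inside $\g^n \otimes \g^n$, which in the component notation of Section~\ref{nota} is $\sum_{j=1}^n (\Lambda)_j = \frac{1}{2}\sum_{j=1}^n \sum_i (x_i)_j \wedge (y_i)_j$. Thus $\Lambda_r^{(n)} - \diag_n(\Lambda) = -\Mix^n(r)$ on the nose, but the claim is the stronger statement that this equals $-\Mix^n(s)$, so the real content is that $\Mix^n(r) = \Mix^n(s)$, i.e., $\Mix^n(\Lambda) = 0$.

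So the key step is to show $\Mix^n(\Lambda) = 0$ for $\Lambda \in \wedge^2 \g$. Writing $2\Lambda = \sum_i x_i \wedge y_i = \sum_i (x_i \otimes y_i - y_i \otimes x_i)$, one sees that as an element of $\g \otimes \g$, $\Lambda$ is anti-symmetric, meaning that if $\Lambda = \sum_a u_a \otimes v_a$ then $\sum_a v_a \otimes u_a = -\sum_a u_a \otimes v_a$. Plugging into the definition $\Mix^n(\Lambda) = \sum_{j<k}\sum_a (v_a)_j \wedge (u_a)_k$, the inner sum $\sum_a (v_a)_j \wedge (u_a)_k$ is the image of $\sum_a v_a \otimes u_a = \Lambda^{21} = -\Lambda$ under the wedge-embedding into the $(j,k)$ slot of $\wedge^2(\g^n)$; but the same image also represents $\Mix$-type contribution of $\Lambda$ itself in the $(j,k)$ slot, and one checks directly that $\sum_a (v_a)_j \wedge (u_a)_k = -\sum_a (u_a)_j \wedge (v_a)_k$ while simultaneously, because the wedge lives in $\wedge^2(\g^n)$ and $j \neq k$, the expression is genuinely the image of $\Lambda^{21}$ in that slot. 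The cleanest argument: $\sum_a (v_a)_j \wedge (u_a)_k$ depends only on the tensor $\sum_a v_a \otimes u_a = \Lambda^{21}$, and since $\Lambda \in \wedge^2\g$ we have $\Lambda^{21} = -\Lambda = -\sum_i x_i \otimes y_i$ (with the appropriate normalization of the $x_i, y_i$), so in fact each block of $\Mix^n(\Lambda)$ can be rewritten and shown to cancel. I would organize this as: (i) establish the bilinearity of $(r) \mapsto \Mix^n(r)$ in $r \in \g \otimes \g$, so that $\Mix^n(r) = \Mix^n(s) + \Mix^n(\Lambda)$; (ii) show $\Mix^n(\Lambda) = 0$ using antisymmetry of $\Lambda$; (iii) combine with the displayed formula for $\Lambda_r^{(n)}$ and the identification $\diag_n(\Lambda) = (\Lambda, \ldots, \Lambda)$.

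The main obstacle, such as it is, is purely notational bookkeeping: one must be careful about the factor-of-two conventions in $2s = \sum_i(x_i \otimes y_i + y_i \otimes x_i)$ and $2\Lambda = \sum_i x_i \wedge y_i$ versus the raw presentation $r = \sum_i x_i \otimes y_i$, and about what the symbol $\wedge$ means inside $(v_a)_j \wedge (u_a)_k$ when $j \neq k$ (here $(v_a)_j$ and $(u_a)_k$ land in different summands of $\g^n$, so the wedge is just the antisymmetrization of the tensor product in $\wedge^2(\g^n)$, with no internal cancellation within a single block). Once these conventions are pinned down, the statement $\Mix^n(\Lambda) = 0$ is immediate: $(\Mix^n(\Lambda))_{j,k} = \sum_a (v_a)_j \wedge (u_a)_k$ where $\sum_a v_a \otimes u_a = \Lambda^{21} = -\Lambda$, and comparing with $\sum_a (u_a)_j \wedge (v_a)_k$ for the same $\Lambda = \sum_a u_a \otimes v_a$, antisymmetry of the wedge in $\wedge^2(\g^n)$ forces each block to vanish. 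I do not expect any genuine difficulty beyond this; the lemma is essentially an unwinding of \eqref{defn-r^(n)} together with the trivial observation $\Mix^n|_{\wedge^2\g} = 0$.
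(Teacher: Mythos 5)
There is a genuine error in your argument, and it sits exactly at the step you call immediate. Your claim (ii), that $\Mix^n(\Lambda) = 0$ for $\Lambda \in \wedge^2\g$, is false. For fixed $j < k$ the map $\g \otimes \g \to \wedge^2(\g^n)$, $u \otimes v \mapsto (v)_j \wedge (u)_k$, is injective precisely because $(v)_j$ and $(u)_k$ lie in different summands of $\g^n$ --- the very point you make yourself when you note there is ``no internal cancellation within a single block''. Antisymmetry of the wedge only says $(v)_j \wedge (u)_k = -(u)_k \wedge (v)_j$, which relates a $(j,k)$-expression to a $(k,j)$-expression; since $\Mix^n$ sums only over $j<k$, nothing cancels. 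Concretely, take $n=2$ and $\Lambda = e_1 \wedge e_2 = e_1 \otimes e_2 - e_2 \otimes e_1$ with $e_1, e_2$ linearly independent: then $\Mix^2(\Lambda) = (e_2)_1 \wedge (e_1)_2 - (e_1)_1 \wedge (e_2)_2$, a sum of two distinct basis bivectors of $\wedge^2(\g^2)$, hence nonzero.

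The compensating error is your identification $\diag_n(\Lambda) = (\Lambda, \ldots, \Lambda)$. Here $\diag_n$ must be applied to the bivector functorially, $\diag_n(\Lambda) = \tfrac12 \sum_i \diag_n(x_i) \wedge \diag_n(y_i)$ with $2\Lambda = \sum_i x_i \wedge y_i$ (this reading is forced by how the Lemma is used in Proposition~\ref{coord-quasi}, where $\diag_\sGam$ of a bivector arises from pushing a bivector forward along $\rho_\sV = I_\Gamma \circ \sigma_\sGam \circ \diag_\sGam$). Expanding $\diag_n(x_i) = \sum_j (x_i)_j$ produces cross terms: the $j=k$ terms give $\sum_j (\Lambda)_j = (\Lambda, \ldots, \Lambda)$, while the $j \neq k$ terms, after pairing the $(j,k)$ and $(k,j)$ contributions using the antisymmetry of $\Lambda$, assemble into $-\Mix^n(\Lambda)$. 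The correct identity, which is the entire content of the paper's proof, is therefore $\diag_n(\Lambda) = (\Lambda, \ldots, \Lambda) - \Mix^n(\Lambda)$, whence $\Lambda_r^{(n)} - \diag_n(\Lambda) = -\Mix^n(r) + \Mix^n(\Lambda) = -\Mix^n(s)$ by the linearity of $\Mix^n$ (your step (i), which is fine). Your two mistakes omit the same term $\Mix^n(\Lambda)$ and so cancel in the final formula, but as written the argument asserts two false identities and does not constitute a proof; repairing it leads exactly to the paper's computation.
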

\begin{proof}
A straightforward calculation shows that $\diag_n(\Lambda) = (\Lambda, \ldots, \Lambda) - \Mix^n(\Lambda)$. Thus
\begin{gather*}
\Lambda_r^{(n)} - \diag_n(\Lambda) = - \Mix^n(r) + \Mix^n(\Lambda) = - \Mix^n(s).\tag*{\qed}
\end{gather*}\renewcommand{\qed}{}
\end{proof}

The following lemma will be used in the proof of Theorem \ref{thm-fuse-mark}.

\begin{Lemma} \label{lem-mix^2}
Let $r\in \g \otimes \g$. For integers $m,n \geq 0$, one has
\begin{gather*}
\big(\Lambda_r^{(m)}, \Lambda_r^{(n)}\big) - (\diag_m, \diag_n)\big(\Mix^2(r)\big) = \Lambda_r^{(m+n)} \in \wedge^2 \big(\g^{m+n}\big).
\end{gather*}
\end{Lemma}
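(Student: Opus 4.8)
The plan is to prove the identity by a direct expansion, using the explicit formula for $\Lambda_r^{(N)}$ displayed just above the statement. Writing $r = \sum_i x_i \otimes y_i$ with $2\Lambda = \sum_i x_i \wedge y_i$, recall that for any integer $N \geq 0$
\begin{gather*}
\Lambda_r^{(N)} = \frac{1}{2} \sum_{j=1}^N \sum_i (x_i)_j \wedge (y_i)_j - \sum_{1 \leq j < k \leq N} \sum_i (y_i)_j \wedge (x_i)_k.
\end{gather*}
First I would apply this with $N = m+n$ and identify $\g^{m+n}$ with $\g^m \times \g^n$, so that the coordinates $1, \ldots, m$ correspond to the $\g^m$-factor and the coordinates $m+1, \ldots, m+n$ to the $\g^n$-factor.

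Next I would split each index sum over $\{1, \ldots, m+n\}$ according to this decomposition: the single sum $\sum_{j=1}^{m+n}$ breaks into $\sum_{j=1}^{m} + \sum_{j=m+1}^{m+n}$, and the double sum $\sum_{1 \leq j < k \leq m+n}$ breaks into three pieces, namely pairs with $j, k \leq m$, pairs with $j, k \geq m+1$, and the ``cross'' pairs with $j \leq m < k$. Grouping the first-block terms of both sums gives exactly the image of $\Lambda_r^{(m)}$ under the inclusion $\g^m \hookrightarrow \g^{m+n}$ onto the first $m$ coordinates; grouping the second-block terms gives, after re-indexing $j \mapsto j-m$, the image of $\Lambda_r^{(n)}$ under $\g^n \hookrightarrow \g^{m+n}$ onto the last $n$ coordinates. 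Together these two groups are by definition $\big(\Lambda_r^{(m)}, \Lambda_r^{(n)}\big) \in \wedge^2(\g^{m+n})$.

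It then remains to recognize the cross terms $-\sum_{1 \leq j \leq m}\sum_{m+1 \leq k \leq m+n}\sum_i (y_i)_j \wedge (x_i)_k$ as $-(\diag_m, \diag_n)\big(\Mix^2(r)\big)$. For this I would unwind the definition $\Mix^2(r) = \sum_i (y_i)_1 \wedge (x_i)_2 \in \wedge^2(\g^2)$ together with the fact that $(\diag_m, \diag_n)$ sends the first $\g$-factor diagonally onto coordinates $1, \ldots, m$ and the second $\g$-factor diagonally onto coordinates $m+1, \ldots, m+n$, so that $(\diag_m, \diag_n)\big((y_i)_1 \wedge (x_i)_2\big) = \big(\sum_{j=1}^m (y_i)_j\big) \wedge \big(\sum_{k=m+1}^{m+n}(x_i)_k\big)$; summing over $i$ and expanding the wedge reproduces the cross terms exactly. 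Combining the three groups yields the claimed formula.

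The only real obstacle here is bookkeeping: keeping careful track of the embedding notation $(v)_j$ and of the block-diagonal map $(\diag_m, \diag_n)$ so that the cross terms are matched correctly with $\Mix^2(r)$. There is no conceptual subtlety, and the computation is short once the index sums are split along $\g^{m+n} = \g^m \times \g^n$.
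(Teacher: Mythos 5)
Your proposal is correct and is essentially the paper's own argument: both identify $(\diag_m,\diag_n)\big(\Mix^2(r)\big)$ with the cross terms $\sum_i\sum_{j\leq m<k}(y_i)_j\wedge(x_i)_k$ and observe that the block Mix terms together with these cross terms reassemble into $\Mix^{m+n}(r)$, while the diagonal parts give $(\Lambda,\ldots,\Lambda)$. You merely run the computation from $\Lambda_r^{(m+n)}$ toward the left-hand side rather than the reverse, which is an immaterial difference.
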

\begin{proof}
Indeed, writing $r = \sum_i x_i \otimes y_i$ and letting $\Lambda \in \wedge^2 \g$ be the anti-symmetric part of $r$, one has
\begin{gather*}
(\diag_m, \diag_n)\big(\Mix^2(r)\big) = \sum_{1 \leq k \leq m, \, 1 \leq l \leq n} (y_i)_k \wedge (x_i)_l,
\end{gather*}
hence
\begin{gather*}
\big(\Lambda_r^{(m)}, \Lambda_r^{(n)}\big) - (\diag_m, \diag_n)\big(\Mix^2(r)\big) \\
\qquad{} = (\Lambda, \ldots, \Lambda) - \sum_{1 \leq k < l \leq m+n} (y_i)_k \wedge (x_i)_l = \Lambda_r^{(m+n)}.\tag*{\qed}
\end{gather*}\renewcommand{\qed}{}
\end{proof}

\subsection{Fusion products of Poisson spaces} \label{subsec-fusion}

Let $n \geq 1$ be an integer, $r \in \g \otimes \g$ a quasitriangular $r$-matrix on a Lie algebra $\g$, and let $(Y, \piY)$ be a~Poisson manifold with a right Poisson action $\rho\colon \g^n \to \XX^1(Y)$ of $(\g, r)^n$, and a~right Poisson action $\psi\colon \h \to \XX^1(Y)$ of a Lie bialgebra $(\h, \delta_\h)$, so that $(Y, \piY, \rho \times \psi)$ is a right $(\g, r)^n \times (\h, \delta_\h)$-Poisson space. By \cite[Lemma~2.16]{Lu-Mou:mixed} and Theorem~\ref{r^(n)}, the triple
\begin{gather}
\Fus_{(\g, r)^n}(Y, \piY, \rho \times \psi) := \big(Y, \piY - \rho\big(\Mix^n(r)\big), (\rho \circ \diag_n) \times \psi \big) \label{eq-fusion}
\end{gather}
is a right $(\g, r) \times (\h, \delta_\h)$-Poisson space, which we call the {\it fusion at $(\g, r)^n$ of $(Y, \piY, \rho \times \psi)$}. As a particular case, suppose that $\h=0$, that
\begin{gather*}
(Y_1, \pi_{\sY_1}, \rho_1), \ldots, (Y_n, \pi_{\sY_n}, \rho_n)
\end{gather*}
are right $(\g, r)$-Poisson spaces, that $Y = Y_1 \times \cdots \times Y_n$ is equipped with the direct product Poisson structure $\piY = \pi_{\sY_1} \times \cdots \times \pi_{\sY_n}$, and that $\rho\colon \g^n \to \XX^1(Y)$ is given by
\begin{gather*}
\rho(x_1, \ldots, x_n) = (\rho_1(x_1), \ldots, \rho_n(x_n)), \qquad x_j \in \g.
\end{gather*}
The $(\g, r)$-Poisson space in \eqref{eq-fusion} is called in \cite{Lu-Mou:mixed} the {\it fusion product of $(Y_j, \pi_{\sY_j}, \rho_j)$, $1 \leq j \leq n$}.

\section{Ciliated graphs and marked surfaces} \label{sec-graphs&surfaces}

In this section, we review the fusion of marked surfaces and ciliated graphs. Our main references are \cite{FR1, LBS1}.

\subsection{Ciliated graphs and marked surfaces} \label{subsec-ciliated}

A {\it marked surface} $(\Sigma, V)$ is a compact Riemann surface, together with a non-empty f\/inite collection of points $V \subset \del \Sigma$ lying in the boundary of $\Sigma$. A {\it skeleton} of a marked surface $(\Sigma, V)$ is a graph $\Gamma$ embedded in $\Sigma$, with set of vertices $V$ and such that $\Sigma$ deformation retracts onto $\Gamma$.

\begin{Proposition}[{\cite[Section 4]{LBS1}}]\label{ex-unique}
Any marked surface $(\Sigma, V)$ admits a skeleton, and skeletons of $(\Sigma, V)$ are unique up to isomorphisms and local changes
 \begin{gather} \label{loc-move}\begin{split}&
\begin{tikzpicture}
\begin{scope}[shift={(-2,0)}]
\draw [fill=black] (0,0) circle [radius=0.05];
\draw [fill=black] (1,0.5) circle [radius=0.05];
\draw [fill=black] (0,1) circle [radius=0.05];
\draw [black] (0,0) to (1,0.5);
\draw [black] (1,0.5) to (0,1);
\end{scope}
\node at (0.5,0.5) {$\longleftrightarrow$};
\begin{scope}[shift={(2,0)}]
\draw [fill=black] (0,0) circle [radius=0.05];
\draw [fill=black] (1,0.5) circle [radius=0.05];
\draw [fill=black] (0,1) circle [radius=0.05];
\draw [black] (0,0) to (1,0.5);
\draw [black] (0,0) to (0,1);
\end{scope}
\end{tikzpicture}
\end{split}
\end{gather}
\end{Proposition}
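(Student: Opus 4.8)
The plan is to establish both assertions by elementary surface topology, in the spirit of \cite[Section~4]{LBS1}. For \textbf{existence}, I would first dispose of the small cases by hand --- if $(\Sigma,V)$ is a disk with one marked point, the skeleton is a single vertex, and with two marked points a single edge; a boundary component without marked points is handled by a separate elementary argument --- and otherwise pick an \emph{ideal triangulation} of $(\Sigma,V)$, i.e., a triangulation of $\Sigma$ whose vertex set is exactly $V$ and all of whose edges are arcs with endpoints in $V$. Such triangulations exist by standard surface topology, and one gets a CW structure on $\Sigma$ whose $0$-skeleton is $V$. I would then collapse $\Sigma$ onto a $1$-dimensional subcomplex by repeatedly discarding a $2$-cell together with an edge of it that lies on no other remaining $2$-cell. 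Such a free edge is always available while $2$-cells remain: if every edge of the union of the remaining $2$-cells were incident to two of its triangles, that union would be a closed subsurface of $\Sigma$, hence a union of components of $\Sigma$, contradicting that every component of $\Sigma$ has non-empty boundary. Since no $0$-cell is ever removed, the resulting $1$-complex $\Gamma$ is embedded in $\Sigma$, has vertex set exactly $V$, and is a deformation retract of $\Sigma$; thus $\Gamma$ is a skeleton.

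For \textbf{uniqueness}, the real content is that all skeleta of $(\Sigma,V)$ are connected under the moves~\eqref{loc-move}, and I would reduce this to the classical connectedness of the flip graph of ideal triangulations of a marked surface (Harer and Penner; Fomin--Shapiro--Thurston in the bordered case). The bridge is again the collapsing construction: it assigns to an ideal triangulation $T$ a skeleton $\Gamma_T$, well defined up to isotopy and the moves~\eqref{loc-move}, because two admissible sequences of collapses differ by re-routing an edge of $\Gamma_T$ across a single triangle --- which is precisely the slide drawn in~\eqref{loc-move}. One then checks that a single flip $T\leftrightarrow T'$ changes $\Gamma_T$ into $\Gamma_{T'}$ by a bounded number of those moves, a purely local verification, and that an arbitrary skeleton can be brought, by the moves~\eqref{loc-move}, into the shape $\Gamma_T$ for some $T$. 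Concatenating a chain of flips then joins any two skeleta through isomorphisms and local moves.

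The \textbf{main obstacle} is precisely this circle of reductions. Connectedness of the flip graph of ideal triangulations is a genuine theorem of two-dimensional topology, not a formal consequence of the definitions, and normalizing a skeleton to the form $\Gamma_T$ --- removing, by local moves, complementary regions that are monogons or bigons and so cannot be ideally triangulated --- while simultaneously tracking the linear order of half-edges at each vertex induced by the orientation of $\Sigma$ and by $\del\Sigma$ (the ciliation needed later) is where the work lies. A more self-contained alternative avoids triangulations altogether: isotope one skeleton $\Gamma_2$ so that it meets the other, $\Gamma_1$, transversally and in the minimum possible number of points, and run a double induction on $|\Gamma_1\cap\Gamma_2|$ and on the complexity of $(\Sigma,V)$, using an innermost disk or bigon in the complement of $\Gamma_1\cup\Gamma_2$ to perform one move~\eqref{loc-move} that strictly decreases $|\Gamma_1\cap\Gamma_2|$; there the case analysis over the possible innermost configurations is the delicate point.
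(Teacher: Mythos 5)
There is nothing in the paper to compare your argument with: Proposition~\ref{ex-unique} is quoted from \cite[Section~4]{LBS1} and no proof is given in this article, so your attempt has to be judged on its own terms as a proof of the cited fact.

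On those terms, the existence half of your sketch is essentially sound. Two remarks: first, the case you set aside (``a boundary component without marked points'') is harmless when only a boundary \emph{circle} of a component is unmarked (the skeleton simply avoids it), while a whole \emph{component} of $\Sigma$ without marked points admits no skeleton at all, so this is an implicit hypothesis on $(\Sigma,V)$ rather than a case to be handled; second, your ``free edge exists'' argument is stated too loosely --- a subcomplex in which every edge lies in two triangles need not be a subsurface (it can be pinched at vertices), so it is not literally ``open and closed in $\Sigma$''. The conclusion is still correct and the fix is easy: since every vertex lies on $\partial\Sigma$, the fan of triangles at any vertex meeting the remaining $2$-cells already produces an edge contained in exactly one remaining triangle, so one never needs the closed-subsurface dichotomy.

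The genuine gap is in the uniqueness half, which is the real content of the proposition. Both routes you propose hinge on steps that are asserted rather than proved: (i) that an arbitrary skeleton can be brought, using only isotopies and the slide move~\eqref{loc-move}, into the collapsed form $\Gamma_T$ associated to an ideal triangulation $T$ (this requires eliminating monogon and bigon complementary regions and is not a formal consequence of anything you wrote), and (ii) that a single flip $T\leftrightarrow T'$ translates into a bounded sequence of slides. The flip-connectedness theorems you invoke concern triangulations, not skeleta, so without (i) and (ii) the reduction does not close; likewise the innermost-bigon alternative is only named, with the decisive case analysis left open. So what you have is a plausible programme, not a proof. (Your worry about ``tracking the ciliation'' is, by contrast, not an issue for the statement as given: the linear order of half-edges at each $v\in V$ is induced automatically by the orientation of $\Sigma$ and the boundary, and the proposition only asserts uniqueness of the embedded graph up to isomorphism and the moves~\eqref{loc-move}.) A cleaner way to discharge the uniqueness claim is to cite or reproduce the classical theory of spines of surfaces and Whitehead/slide moves, which is exactly the form in which \cite{LBS1} uses it.
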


Let $\Gamma$ be a skeleton of a marked surface $(\Sigma, V)$. For every $v \in V$, the orientation of $\Sigma$ induces a linear ordering of the half edges incident to $v$, thus one is led to formulate the following

\begin{Definition}[\cite{FR1, LBS1}]
A {\it ciliated graph} is a graph in which each vertex is equipped with a~linear order of the half edges incident to it.
\end{Definition}

The name is inspired by the fact that when drawing a ciliated graph, one can specify the linear order of half edges at each vertex by drawing a small ``cilium" between the minimal and maximal half edge.

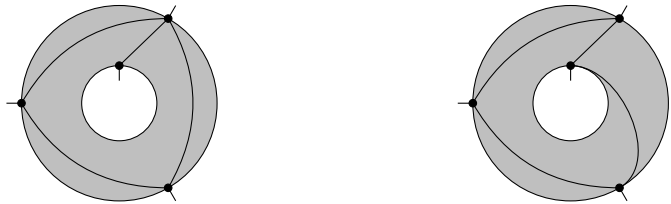
\begin{figure}[h]\centering
\begin{tikzpicture}
\pgfmathsetmacro{\ra}{0.5}
\pgfmathsetmacro{\rb}{1.3}
\begin{scope}[shift={(-3,0)}]
\draw [fill=lightgray] (0,0) circle [radius=\rb];
\draw [fill=white] (0,0) circle [radius=\ra];
\draw [fill=black] (0,\ra) circle [radius=0.05];
\draw [fill=black] (180:\rb) circle [radius=0.05];
\draw [fill=black] (60:\rb) circle [radius=0.05];
\draw [fill=black] (-60:\rb) circle [radius=0.05];
\draw [black] (180:\rb) to [out=60, in=180] (60:\rb);
\draw [black] (60:\rb) to [out=-60, in=60] (-60:\rb);
\draw [black] (180:\rb) to [out=-60, in=180] (-60:\rb);
\draw [black] (0,\ra) to (60:\rb);
\draw [black] (180:\rb) to (180:\rb+0.2);
\draw [black] (0,\ra) to (0,\ra-0.2);
\draw [black] (60:\rb) to (60:\rb+0.2);
\draw [black] (-60:\rb) to (-60: \rb+0.2);
\end{scope}
\begin{scope}[shift={(3,0)}]
\draw [fill=lightgray] (0,0) circle [radius=\rb];
\draw [fill=white] (0,0) circle [radius=\ra];
\draw [fill=black] (0,\ra) circle [radius=0.05];
\draw [fill=black] (180:\rb) circle [radius=0.05];
\draw [fill=black] (60:\rb) circle [radius=0.05];
\draw [fill=black] (-60:\rb) circle [radius=0.05];
\draw [black] (180:\rb) to [out=60, in=180] (60:\rb);
\draw [black] (180:\rb) to [out=-60, in=180] (-60:\rb);
\draw [black] (0,\ra) to (60:\rb);
\draw [black] (0,\ra) to [out=0, in=30] (-60:\rb);
\draw [black] (180:\rb) to (180:\rb+0.2);
\draw [black] (0,\ra) to (0,\ra-0.2);
\draw [black] (60:\rb) to (60:\rb+0.2);
\draw [black] (-60:\rb) to (-60: \rb+0.2);
\end{scope}
\end{tikzpicture}
\caption{An annulus with four marked points and two non-isomorphic skeletons with their cilium structure.}
\end{figure}

We introduce further notations in order to discuss ciliated graphs. Let $\Gamma$ be a ciliated graph with set of vertices $V$ and set of edges $\Gamma_1$. Denote by $\Gamma_{1/2}$ the set of half edges of $\Gamma$, and note that $\Gamma_{1/2}$ comes with a natural involution with no f\/ixed points $\alpha \mapsto \calpha$, mapping a half edge to the opposite half edge, and for $\alpha \in \Gamma_{1/2}$ we write $[\alpha, \calpha]$ for the edge composed of the two half edges $\alpha$ and $\calpha$. For every $v \in V$, let $\Gamma_v \subset \Gamma_{1/2}$ be the set of half edges incident to $v$, so that $\Gamma_{1/2} = \bigsqcup_{v \in V} \Gamma_v$ and $\Gamma_v$ is a linearly ordered set for each $v \in V$.

\subsection{Fusion of ciliated graphs and marked surfaces}

We recall now the fusion of marked surfaces and ciliated graphs.

Let $(\Sigma, V)$ be a marked surface. Since $\Sigma$ is oriented, every $v \in V$ def\/ines a piece of arc in $\del \Sigma$ starting at $v$ and a piece of arc in $\del \Sigma$ ending at $v$. For a pair $(v_1, v_2)$ of distinct elements of~$V$, the {\it fusion of $\Sigma$ at $(v_1, v_2)$} is the marked surface $(\Sigma_{(v_1, v_2)}, V_{v_1= v_2})$ obtained by gluing a piece of arc ending in~$v_1$ with a piece or arc starting at $v_2$, so that $v_1$ and $v_2$ are identif\/ied. The set of marked points $V_{v_1=v_2}$ on $\Sigma_{(v_1, v_2)}$ is then obtained from~$V$ by identifying~$v_1$ and~$v_2$.

\begin{center}
\begin{tikzpicture}
\pgfmathsetmacro{\ra}{1}
\begin{scope}[shift={(-3,0)}]
\coordinate (o_1) at (-1.5, 0);
\coordinate (o_2) at (1.5, 0);
\path [fill=lightgray] (o_1) circle [radius=\ra];
\path [fill=white] ($(o_1) + (-\ra, -\ra)$) rectangle ($(o_1) + (0,\ra)$);
\path [fill=lightgray] (o_2) circle [radius=\ra];
\path [fill=white] ($(o_2) + (0, -\ra)$) rectangle ($(o_2)+ (\ra, \ra)$);
\coordinate (v_2) at ($(o_1) + (\ra, 0)$);
\coordinate (v_1) at ($(o_2) - (\ra, 0)$);
\draw [fill=black] (v_2) circle [radius=0.04];
\node [left] at (v_2) {$v_2$};
\draw [fill=black] (v_1) circle [radius=0.04];
\node [right] at (v_1) {$v_1$};
\draw [very thin] (o_1) ++ (0, \ra) arc (90:-90:\ra);
\draw [very thick] (v_2) arc (0:35:\ra);
\draw [very thin] ($(o_2)+ (0, -\ra)$) arc (270:90:\ra);
\draw [very thick] (v_1) arc (180:145:\ra);
\end{scope}
\node at (0,0) {$\stackrel{{\rm fusion}}{\longrightarrow}$};
\begin{scope}[shift={(3,0)}]
\coordinate (o_1) at (-1,0);
\coordinate (o_2) at (1,0);
\path [fill=lightgray] (o_1) circle [radius=\ra];
\path [fill=lightgray] (o_2) circle [radius=\ra];
\draw [very thin] ($(o_1) + (0, -\ra)$) arc (-90: 0:\ra);
\draw [very thin] ($(o_2) + (0, -\ra)$) arc (270: 180:\ra);
\path [fill=white] ($(o_1) - (\ra,\ra)$) rectangle ($(o_1)+(0,\ra)$);
\path [fill=white] ($(o_2) + (0,-\ra)$) rectangle ($(o_2)+(\ra,\ra)$);
\path [fill=white] (o_1) rectangle ($(o_2)+(\ra,\ra)$);
\path [fill=lightgray] ($(o_1)+(0,\ra)$) to [out=0,in=135] ($(o_1)+(\ra, 0.7)$) to [out=45,in=180] ($(o_2)+(0,\ra)$) to (o_2) to (o_1) to ($(o_1)+(0,\ra)$);
\draw [very thin] ($(o_1)+(0,\ra)$) to [out=0,in=135] ($(o_1)+(\ra, 0.7)$) to [out=45,in=180] ($(o_2)+(0,\ra)$);
\draw [fill=black] ($(o_1)+(\ra,0)$) circle [radius=0.04];
\node [above] at ($(o_1)+(\ra,0)$) {$v_1=v_2$};
\end{scope}
\end{tikzpicture}
\end{center}

Let $\Gamma$ be a ciliated graph with set of vertices $V$ and edges $\Gamma_1$, and let $(v_1, v_2)$ be a pair of distinct vertices, with $\Gamma_{v_1} = \{\alpha_1< \dots < \alpha_k\}$ and $\Gamma_{v_2} = \{\alpha_{k+1} < \dots < \alpha_l\}$. The {\it fusion of $\Gamma$ at $(v_1, v_2)$} is the ciliated graph $\Gamma_{(v_1, v_2)}$ obtained by identifying $v_1$ and $v_2$, and with linear order on the set $\Gamma_{v_1 = v_2}$ of half edges incident to $v_1 = v_2$ given by $\alpha_1 < \cdots < \alpha_k < \alpha_{k+1} < \dots < \alpha_l$.

Note that the fusion of marked surfaces and ciliated graphs are associative operations, but not necessarily commutative. The following lemma is straightforward.

\begin{Lemma} \label{fusion-graph-surf}
Let $(\Sigma, V)$ be a marked surface with skeleton $\Gamma$, and let $v_1, v_2 \in V$ be distinct points. Then the image of $\Gamma$ under the fusion map $(\Sigma, V) \to (\Sigma_{(v_1, v_2)}, V_{v_1 =v_2})$ is isomorphic to~$\Gamma_{(v_1, v_2)}$, and is a skeleton for $(\Sigma_{(v_1, v_2)}, V_{v_1= v_2})$.
\end{Lemma}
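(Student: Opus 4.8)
The statement to prove is Lemma~\ref{fusion-graph-surf}, which asserts that the image of a skeleton $\Gamma$ of $(\Sigma, V)$ under the fusion map is a skeleton of $(\Sigma_{(v_1,v_2)}, V_{v_1=v_2})$ isomorphic to $\Gamma_{(v_1,v_2)}$.

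\textbf{Proof proposal.} The plan is to track carefully what the fusion map does, both to the surface and to the embedded graph, and to verify the three defining conditions of a skeleton directly. First I would observe that the fusion map $q\colon (\Sigma, V) \to (\Sigma_{(v_1,v_2)}, V_{v_1=v_2})$ is a quotient map that is injective away from the two glued boundary arcs at $v_1$ and $v_2$, and identifies those arcs homeomorphically while identifying $v_1$ with $v_2$. Since $\Gamma \subset \Sigma$ meets $\del\Sigma$ only at the vertices $V$ (the edges of a skeleton lie in the interior except at their endpoints), the glued arcs contain no part of $\Gamma$ other than the points $v_1, v_2$ themselves; hence $q|_\Gamma$ is injective except that it identifies $v_1$ and $v_2$, so $q(\Gamma)$ is a graph with vertex set $V_{v_1=v_2}$ and one edge for each edge of $\Gamma$.

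Next I would identify $q(\Gamma)$ as a ciliated graph with $\Gamma_{(v_1,v_2)}$. Combinatorially, $q(\Gamma)$ is obtained from $\Gamma$ by merging the vertices $v_1$ and $v_2$, so the underlying graph is the same as that of $\Gamma_{(v_1,v_2)}$; what requires a word is the linear order on the half edges at the merged vertex. The fusion of surfaces glues the arc \emph{ending} at $v_1$ to the arc \emph{starting} at $v_2$, so in the cyclic/linear order around the new vertex $v_1=v_2$ induced by the orientation, the half edges formerly at $v_1$ come first (they sit along the portion of boundary that was near $v_1$, ordered as before), followed by the half edges formerly at $v_2$. This is exactly the order $\alpha_1 < \cdots < \alpha_k < \alpha_{k+1} < \cdots < \alpha_l$ defining $\Gamma_{(v_1,v_2)}$, giving the claimed isomorphism of ciliated graphs.

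Finally I would check that $q(\Gamma)$ is a skeleton of $(\Sigma_{(v_1,v_2)}, V_{v_1=v_2})$, i.e., that it is embedded, has the correct vertex set, and that $\Sigma_{(v_1,v_2)}$ deformation retracts onto it. Embeddedness and the vertex set follow from the first paragraph. For the deformation retraction, I would argue that a deformation retraction $H\colon \Sigma \times [0,1] \to \Sigma$ of $\Sigma$ onto $\Gamma$ can be chosen compatibly with the gluing: the glued arcs lie in $\del\Sigma$ and retract into $\Gamma$ along the boundary, so $H$ descends to a well-defined map on the quotient $\Sigma_{(v_1,v_2)}$, exhibiting it as deformation retracting onto $q(\Gamma)$. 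Alternatively, and perhaps more cleanly, one can note that the fusion $\Sigma_{(v_1,v_2)}$ is homotopy equivalent to $\Sigma$ rel the core (gluing along a contractible arc) and invoke uniqueness of skeletons up to the moves in Proposition~\ref{ex-unique}. The main obstacle is the deformation-retraction step: one must make sure the chosen retraction of $\Sigma$ is compatible with the boundary identification so that it passes to the quotient, which is where a little care with the collar of the glued arcs is needed, though no serious computation is involved — this is why the lemma is labeled ``straightforward.''
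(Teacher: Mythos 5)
Your proposal is correct: the paper offers no argument at all for this lemma (it is simply declared ``straightforward''), and your direct verification --- tracking the quotient map on the embedded graph, matching the induced half-edge order at the fused vertex $v_1=v_2$ with the combinatorial definition of $\Gamma_{(v_1,v_2)}$, and choosing the deformation retraction compatibly with the boundary gluing (e.g.\ via collars) so that it descends --- is exactly the kind of routine check the authors had in mind. The only point deserving a remark is that the paper never spells out the convention by which the orientation induces the linear order at a marked point, so your claim that the $v_1$ half edges precede the $v_2$ half edges is convention-dependent; with the convention implicitly used in the paper (sweeping from the outgoing boundary arc to the incoming one, the glued arc being the incoming arc at $v_1$ and the outgoing arc at $v_2$) it comes out as you state.
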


Since
\begin{tikzpicture}
\draw [fill=black] (0,0.6) circle [radius=0.05];
\draw [fill=black] (0.7,0.6) circle [radius=0.05];
\draw [black] (0,0.6) to (0.7,0.6);
\end{tikzpicture}
is a skeleton for a disk with two marked points, and since every ciliated graph can be obtained by successive fusion of copies of
\begin{tikzpicture}
\draw [fill=black] (0,0.6) circle [radius=0.05];
\draw [fill=black] (0.7,0.6) circle [radius=0.05];
\draw [black] (0,0.6) to (0.7,0.6);
\end{tikzpicture},
 every marked surface can be obtained by successive fusion of disks with two marked points. Conversely, a ciliated graph $\Gamma$ with set of edges $V$ is the skeleton of a marked surface $(\Sigma_\Gamma, V)$, well defined up to isomorphism, obtained by fusing marked disks corresponding to the edges of $\Gamma$. Thus the map $\Gamma \mapsto (\Sigma_\Gamma, V)$ gives a~bijective correspondence between isomorphism classes of ciliated graphs up to local changes in~\eqref{loc-move} and isomorphism classes of marked surfaces.

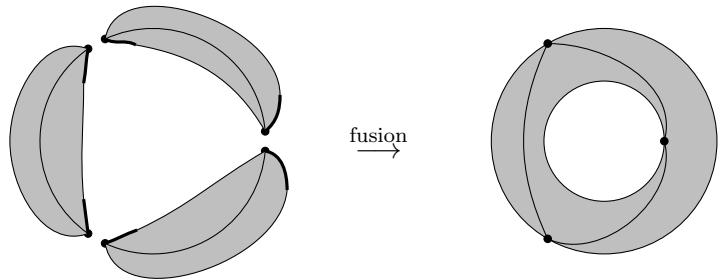
\begin{figure}[h]\centering
\begin{tikzpicture}
\pgfmathsetmacro{\ra}{1.5}
\pgfmathsetmacro{\rb}{0.8}
\begin{scope}[shift={(-3,0)}]
\coordinate (v0) at (5: \ra); \coordinate (v1) at (115: \ra); \coordinate (v2) at (125: \ra); \coordinate (v3) at (235: \ra); \coordinate (v4) at (245: \ra); \coordinate (v5) at (-5: \ra);
\draw [fill=black] (v0) circle [radius= 0.05];
\draw [fill=black] (v1) circle [radius= 0.05];
\draw [fill=black] (v2) circle [radius= 0.05];
\draw [fill=black] (v3) circle [radius= 0.05];
\draw [fill=black] (v4) circle [radius= 0.05];
\draw [fill=black] (v5) circle [radius= 0.05];
\draw [fill=lightgray](v0) to [out=40,in=-90] (20:\ra+ 0.3) to [out=90,in=80] (v1) to [out=-30,in=150] (100: \ra-0.2) to [out=-10,in=120] (v0);
\draw [very thick] (v0) to [out=40,in=-90] (20:\ra+ 0.3); \draw [very thick] (v1) to [out=-30,in=150] (100: \ra-0.2);
\draw [fill=lightgray] (v5) to [out=-10,in=90] (-20: \ra+0.4) to [out=-95,in=-80] (v4) to [out=20,in=200] (260: \ra-0.3) to [out=25,in=210] (v5);
\draw [very thick] (v5) to [out=-10,in=90] (-20: \ra+0.4); \draw [very thick] (v4) to [out=20,in=200] (260: \ra-0.3);
\draw [fill=lightgray] (v2) to [out=-100,in=80] (140: \ra -0.3) to [out=-90,in=95] (220:\ra-0.3) to (v3) to [out=200,in=-90] (180:\ra+0.4) to [out=90,in=160] (v2);
\draw [very thick] (v2) to [out=-100,in=80] (140: \ra -0.3); \draw [very thick] (220:\ra-0.3) to (v3);
\draw (v0) arc (5:115:\ra); \draw (v2) arc (125: 235:\ra); \draw (v4) arc (245:355:\ra);
\end{scope}
\node at (0,0) {$\stackrel{{\rm fusion}}{\longrightarrow}$};
\begin{scope}[shift={(3,0)}]
\draw [fill=lightgray] (0,0) circle [radius=\ra];
\draw [fill=white] (0,0) circle [radius=\rb];
\draw [fill=black] (0:\rb) circle [radius= 0.05];
\draw [fill=black] (120:\ra) circle [radius= 0.05];
\draw [fill=black] (240:\ra) circle [radius= 0.05];
\draw (0:\rb) to [out=75,in=0] (120:\ra) to [out=-115,in=115] (240:\ra) to [out=-25,in=-75] (0:\rb);
\end{scope}
\end{tikzpicture}
\caption{An annulus with three marked points obtained by fusing three disks with two marked points each.}
\end{figure}

\section{The Fock--Rosly Poisson structure} \label{sec-FR-Poiss}

In this section, we introduce a Poisson structure, f\/irst discovered by Fock and Rosly, on the moduli space of f\/lat connections over a marked surface, which is def\/ined by an action of a Lie algebra and a quasitriangular $r$-matrix.

Throughout Section~\ref{sec-FR-Poiss}, $G$ is a connected complex Lie group, and $\g$ is its Lie algebra.

\subsection{The moduli space of f\/lat connections over a marked surface} \label{subsec-moduli}

For a marked surface $(\Sigma, V)$, let $\Pi_1(\Sigma, V)$ be the fundamental groupoid of $\Sigma$ over the set of base points $V$, and consider
\begin{gather*}
\A(\Sigma, V) = \Hom(\Pi_1(\Sigma, V), G),
\end{gather*}
the moduli space of f\/lat connections on $G$-principal bundles over $\Sigma$ which are trivialized over~$V$. The group $G^V$ naturally acts on the right on $\A(\Sigma, V)$ by gauge transformations. For $p \in \Pi_1(\Sigma, V)$, denote by $\operatorname{ev}_p\colon \A(\Sigma, V) \to G$ the evaluation at~$p$, by $\theta(p), \tau(p) \in V$ the respective source and target of~$p$, and if $g \in G^V$ and $v \in V$, recall from Section~\ref{nota} that $g_v$ is the $v$'th component of~$g$. The action of $G^V$ on $\A(\Sigma, V)$ is then given by
\begin{gather}
\rho_{\sV}\colon \ \A(\Sigma, V) \times G^V \to \A(\Sigma, V), \qquad \operatorname{ev}_p(\rho_{\sV}(y, g)) = g_{\theta(p)}^{-1} \operatorname{ev}_p(y) g_{\tau(p)}, \label{lambda}
\end{gather}
where $g \in G^V$, $y \in \A(\Sigma, V)$, and $p \in \Pi_1(\Sigma, V)$. Given a skeleton $\Gamma$ of $(\Sigma, V)$ and an orientation of each edge of $\Gamma$, $\Pi_1(\Sigma, V)$ is then the free groupoid generated by $\Gamma$, and thus one has a natural dif\/feomorphism
\begin{gather}
I_{\Gamma}\colon \ G^{\Gamma_1} \stackrel{\cong}{\longrightarrow} \A(\Sigma, V). \label{orient-edge}
\end{gather}
Now, choose a pair of distinct marked points $(v_1, v_2)$. The fusion map $(\Sigma, V) \to (\Sigma_{(v_1, v_2)}, V_{v_1= v_2})$ induces a map
\begin{gather}
\varphi_{(v_1, v_2)}\colon \ \A\big(\Sigma_{(v_1, v_2)}, V_{v_1= v_2}\big) \longrightarrow \A(\Sigma, V), \label{varphi_v_1,v_2}
\end{gather}
and by choosing a skeleton for $(\Sigma, V)$, one easily sees that $\varphi_{(v_1, v_2)}$ is a dif\/feomorphism. The next lemma is straightforward.

\begin{Lemma} \label{big-action}
Let $\diag_{(v_1, v_2)}\colon G^{V_{v_1=v_2}} \to G^V$ be the canonical embedding. Then for any $g \in G^{V_{v_1= v_2}}$, one has
\begin{gather*}
\varphi_{(v_1, v_2)}\big(\rho_{\sV_{v_1=v_2}}(y, g)\big) = \rho_{\sV} \big( \varphi_{v_1, v_2}(y), \diag_{(v_1, v_2)}(g) \big), \qquad y \in \A(\Sigma_{(v_1, v_2)}, V_{v_1= v_2}).
\end{gather*}
\end{Lemma}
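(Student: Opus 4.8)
The plan is to verify the identity pointwise. Since $\A(\Sigma,V)=\Hom(\Pi_1(\Sigma,V),G)$, two of its elements coincide as soon as they take the same value under every evaluation map $\operatorname{ev}_p$, $p\in\Pi_1(\Sigma,V)$, and by \eqref{lambda} each of the gauge actions $\rho_{\sV}$ and $\rho_{\sV_{v_1=v_2}}$ is described explicitly through the $\operatorname{ev}_p$'s. So it remains only to unwind the definitions of $\varphi_{(v_1,v_2)}$ and of $\diag_{(v_1,v_2)}$, and to compare.

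First I would record two elementary facts. \emph{(a)} The fusion map $f\colon(\Sigma,V)\to(\Sigma_{(v_1,v_2)},V_{v_1=v_2})$ induces a functor $\Pi_1(f)\colon\Pi_1(\Sigma,V)\to\Pi_1(\Sigma_{(v_1,v_2)},V_{v_1=v_2})$ by pushing forward paths; on objects it is the quotient $V\to V_{v_1=v_2}$, which I write $v\mapsto\overline v$, so that $\theta(\Pi_1(f)(p))=\overline{\theta(p)}$ and $\tau(\Pi_1(f)(p))=\overline{\tau(p)}$; and the map $\varphi_{(v_1,v_2)}$ of~\eqref{varphi_v_1,v_2} is precisely precomposition with $\Pi_1(f)$, i.e.\ $\operatorname{ev}_p(\varphi_{(v_1,v_2)}(y))=\operatorname{ev}_{\Pi_1(f)(p)}(y)$ for all $p$ and $y$. (This is the statement, recalled just before the lemma, that $\varphi_{(v_1,v_2)}$ is the map induced by the fusion map; the fact that it is moreover a diffeomorphism comes from Lemma~\ref{fusion-graph-surf}, fusion leaving the edge set of a skeleton unchanged, and is not needed here.) \emph{(b)} The canonical embedding $\diag_{(v_1,v_2)}\colon G^{V_{v_1=v_2}}\to G^V$ is pullback along the quotient: $(\diag_{(v_1,v_2)}(g))_v=g_{\overline v}$ for all $v\in V$.

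With \emph{(a)} and \emph{(b)} the computation is immediate. Fix $p\in\Pi_1(\Sigma,V)$ and write $\theta=\theta(p)$, $\tau=\tau(p)$. On one hand, using \emph{(a)} and then \eqref{lambda} for $\rho_{\sV_{v_1=v_2}}$,
\begin{gather*}
\operatorname{ev}_p\big(\varphi_{(v_1,v_2)}\big(\rho_{\sV_{v_1=v_2}}(y,g)\big)\big)
=\operatorname{ev}_{\Pi_1(f)(p)}\big(\rho_{\sV_{v_1=v_2}}(y,g)\big)
=g_{\overline{\theta}}^{-1}\,\operatorname{ev}_{\Pi_1(f)(p)}(y)\,g_{\overline{\tau}}.
\end{gather*}
On the other hand, using \eqref{lambda} for $\rho_{\sV}$, then \emph{(a)}, then \emph{(b)},
\begin{gather*}
\operatorname{ev}_p\big(\rho_{\sV}\big(\varphi_{(v_1,v_2)}(y),\diag_{(v_1,v_2)}(g)\big)\big)
=\big(\diag_{(v_1,v_2)}(g)\big)_{\theta}^{-1}\,\operatorname{ev}_p\big(\varphi_{(v_1,v_2)}(y)\big)\,\big(\diag_{(v_1,v_2)}(g)\big)_{\tau}\\
\qquad{}=g_{\overline{\theta}}^{-1}\,\operatorname{ev}_{\Pi_1(f)(p)}(y)\,g_{\overline{\tau}}.
\end{gather*}
The two expressions agree for every $p$, so the two elements of $\A(\Sigma,V)$ are equal. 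The whole argument is bookkeeping; the only point worth stating with care rather than computing is fact~\emph{(a)} — the identification of $\varphi_{(v_1,v_2)}$ with precomposition by $\Pi_1(f)$ together with its effect on sources and targets — after which there is no obstacle.
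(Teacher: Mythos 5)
Your proof is correct: the paper simply declares this lemma straightforward and omits the argument, and your verification via evaluation maps, the induced functor $\Pi_1(f)$ on fundamental groupoids (with $\varphi_{(v_1,v_2)}$ as precomposition), and the description of $\diag_{(v_1,v_2)}$ as pullback along $V\to V_{v_1=v_2}$ is exactly the routine bookkeeping the paper intends. No gaps.
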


Let $\Gamma$ be a skeleton of $(\Sigma, V)$, and for any $\gamma \in \Gamma_1$, let $(D_\gamma, V_\gamma)$ be a disk marked with two points, where $V_\gamma \subset \Gamma_{1/2}$ consists of the two half edges of $\gamma$, so that $\sqcup_{\gamma \in \Gamma_1} V_\gamma= \Gamma_{1/2}$. As $(\Sigma, V)$ is a fusion of $(\sqcup_{\gamma \in \Gamma_1} D_\gamma, \Gamma_{1/2})$, one has a dif\/feomorphism
\begin{gather*}
\varphi_{\sGam}\colon \ \A(\Sigma, V) \stackrel{\cong}{\longrightarrow} \A\big(\sqcup_{\gamma \in \Gamma_1} D_\gamma, \Gamma_{1/2}\big),
\end{gather*}
and thus an action of $G^{\Gamma_{1/2}}$ on $\A(\Sigma, V)$ via $\varphi_{\sGam}$. Choosing an orientation for each edge of $\Gamma$ and identifying $\A(\Sigma, V) \cong G^{\Gamma_1}$ as in~\eqref{orient-edge}, the action of $G^{\Gamma_{1/2}}$ on $G^{\Gamma_1}$ is given by
\begin{gather}
\sigma_{\sGam}\colon \ G^{\Gamma_1} \times G^{\Gamma_{1/2}} \to G^{\Gamma_1}, \nonumber\\
\sigma_{\sGam}(g, h)_\gamma = h_{\alpha_\gamma}^{-1} g_\gamma h_{\calpha_\gamma}, \qquad h \in G^{\Gamma_{1/2}}, \qquad g \in G^{\Gamma_1}, \qquad \gamma \in \Gamma_1, \label{sigma}
\end{gather}
where for $\gamma \in \Gamma_1$, $\alpha_\gamma \in \Gamma_{1/2}$ is the source half edge of $\gamma$.

\subsection{The Fock--Rosly Poisson structure} \label{subsec-FR}

Let $(\Sigma, V)$ be a marked surface and $\Gamma$ an oriented skeleton of $(\Sigma, V)$. From now till the end of Section~\ref{sec-FR-Poiss}, we f\/ix an
\begin{gather*}
s \in \big(S^2\g\big)^\g.
\end{gather*}
For every $v \in V$, let $\Lambda_v \in \wedge^2 \g$ be such that $r_v = s + \Lambda_v$ is a quasitriangular $r$-matrix. Identi\-fying~$\g^{\sGam_v}$ with~$\g^{|\sGam_v|}$ using the linear order on $\Gamma_v$, let $r_v^{(\varepsilon_v, \sGam_v)} \in \g^{\sGam_v} \otimes \g^{\sGam_v}$ be as in~\eqref{defn-r^(n)}, where $\varepsilon_v\colon \Gamma_v \to \{ -1, 1\}$ is def\/ined as
\begin{gather*}
\varepsilon_v(\alpha) = \begin{cases}
\hphantom{-}1, & \alpha \ \text{is a source half edge}, \\
-1, & \alpha \ \text{is an end half edge},
\end{cases} \qquad \alpha \in \Gamma_v.
\end{gather*}
Since $\Gamma_{1/2} = \bigsqcup_{v \in V} \Gamma_v$, one has $\g^{\sGam_{1/2}} = \bigoplus_{v \in V} \g^{\sGam_v}$, and recalling our notation in Section~\ref{nota}, let
\begin{gather}
r_{\sGam} = \sum_{v \in V} \big( r_v^{(\varepsilon_v, \sGam_v)} \big)_v \in \g^{\sGam_{1/2}} \otimes \g^{\sGam_{1/2}}, \label{defn-r_sgam}
\end{gather}
a quasitriangular structure for the Lie bialgebra
\begin{gather*}
\bigoplus_{v \in V} \big( \g^{\sGam_v}, \delta_{r_v}^{(\sGam_v)} \big), %\label{big-lie-bialg}
\end{gather*}
where $\delta_{r_v}^{(\sGam_v)}$ is as in~\eqref{delta_r^n}. Using the notational convention in Section~\ref{nota}, recall from \eqref{sigma} the right Lie algebra action $\sigma_{\sGam}\colon \g^{\sGam_{1/2}} \to \XX^1(G^{\Gamma_1})$.

\begin{Theorem} \label{main-thm}
The bivector field
\begin{gather*}
\pi_{\sGam} = \sigma_{\sGam}(r_{\sGam}) \in \XX^2\big(G^{\Gamma_1}\big) %\label{defn pi_fr}
\end{gather*}
is a Poisson structure on $G^{\Gamma_1}$.
\end{Theorem}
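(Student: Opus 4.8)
The plan is to deduce the theorem directly from Proposition~\ref{Poi-def-r}. Recall first that $r_\sGam$ is a quasitriangular $r$-matrix on $\g^{\sGam_{1/2}}$: this is part of the construction following~\eqref{defn-r_sgam}, and it follows from Theorem~\ref{r^(n)}, which makes each $r_v^{(\varepsilon_v, \sGam_v)}$ quasitriangular on $\g^{\sGam_v}$, together with the elementary fact that if $\g = \bigoplus_i \g_i$ is a direct sum of Lie algebras and each $r_i \in \g_i \otimes \g_i$ is quasitriangular on $\g_i$, then $\sum_i r_i$ is quasitriangular on $\g$ (the Schouten brackets of elements of commuting ideals have vanishing cross terms, and $\phi_s$ decomposes accordingly). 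Granting this, Proposition~\ref{Poi-def-r} reduces the theorem to showing that $\sigma_\sGam(r_\sGam)$ is a genuine bivector field, i.e.\ that $\sigma_\sGam(s_\sGam) = 0$, where $s_\sGam \in S^2\big(\g^{\sGam_{1/2}}\big)$ denotes the symmetric part of $r_\sGam$; the proposition then automatically upgrades $\pi_\sGam$ to a Poisson bivector field.

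The first step I would carry out is to identify $s_\sGam$. For each $v \in V$, the tensors $(\Lambda_v, \dots, \Lambda_v)$ and $\Mix^{|\sGam_v|}(r_v)$ entering $r_v^{(\varepsilon_v, \sGam_v)}$ are anti-symmetric, so the symmetric part of $r_v^{(\varepsilon_v, \sGam_v)}$ is $\sum_{\alpha \in \Gamma_v} \varepsilon_v(\alpha)\,(s)_\alpha$. Summing over $v$ and using $\Gamma_{1/2} = \bigsqcup_{v \in V} \Gamma_v$ gives
\begin{gather*}
s_\sGam = \sum_{\alpha \in \Gamma_{1/2}} \varepsilon(\alpha)\,(s)_\alpha \in S^2\big(\g^{\sGam_{1/2}}\big),
\end{gather*}
where $\varepsilon(\alpha) := \varepsilon_v(\alpha)$ for $\alpha \in \Gamma_v$, that is, $\varepsilon(\alpha) = 1$ if $\alpha$ is a source half edge and $\varepsilon(\alpha) = -1$ if $\alpha$ is an end half edge, and $(s)_\alpha$ is the image of $s$ under the $\alpha$-th diagonal embedding $\g^{\otimes 2} \hookrightarrow \big(\g^{\sGam_{1/2}}\big)^{\otimes 2}$.

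The second step is to unwind $\sigma_\sGam((s)_\alpha)$ from~\eqref{sigma}. Each half edge $\alpha$ lies on a unique edge $\gamma = [\alpha_\gamma, \calpha_\gamma]$, and by~\eqref{sigma} the group factor in slot $\alpha$ acts only on the $\gamma$-coordinate of $G^{\Gamma_1}$, by left translation by its inverse if $\alpha = \alpha_\gamma$ and by right translation if $\alpha = \calpha_\gamma$. Differentiating, and writing $s = \sum_k a_k \otimes b_k$, one finds that $\sigma_\sGam((s)_\alpha)$ vanishes on every $G$-factor except the $\gamma$-th, where it equals $s^R$ if $\alpha = \alpha_\gamma$ and $s^L$ if $\alpha = \calpha_\gamma$ --- the two minus signs from differentiating $g_\gamma \mapsto h_{\alpha}^{-1} g_\gamma$ cancelling since $s$ is a two-tensor. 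Because $s \in (S^2\g)^\g$ and $G$ is connected, $s$ is $\Ad$-invariant, so $s^R = s^L$ as two-tensor fields on $G$; hence $\sigma_\sGam((s)_{\alpha_\gamma}) = \sigma_\sGam((s)_{\calpha_\gamma})$ for every $\gamma \in \Gamma_1$. Since $\varepsilon(\alpha_\gamma) = 1 = -\varepsilon(\calpha_\gamma)$, the two half edges of each edge contribute opposite terms to $\sigma_\sGam(s_\sGam) = \sum_{\alpha \in \Gamma_{1/2}} \varepsilon(\alpha)\sigma_\sGam((s)_\alpha)$; regrouping the sum over $\Gamma_{1/2}$ by edges then yields $\sigma_\sGam(s_\sGam) = 0$, which finishes the argument.

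I do not anticipate a genuine obstacle: the reformulation is designed so that Proposition~\ref{Poi-def-r} carries the analytic weight, bypassing Fock and Rosly's direct verification of the Jacobi identity. The only point demanding care is the last step, where the left/right-invariance conventions in~\eqref{sigma} must be tracked so that the source and end half edges of each edge really cancel --- and it is exactly there that the $\g$-invariance of $s$ (equivalently, the bi-invariance $s^R = s^L$) and the sign function $\varepsilon$ built into $r_\sGam$ are used.
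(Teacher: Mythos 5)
Your proposal is correct and follows essentially the same route as the paper: identify the symmetric part $s_\sGam = \sum_{\gamma \in \Gamma_1} (s)_{\alpha_\gamma} - (s)_{\calpha_\gamma}$, check via \eqref{sigma} and the $\Ad$-invariance of $s$ that $\sigma_\sGam(s_\sGam) = 0$, and invoke Proposition~\ref{Poi-def-r}. The only difference is that you spell out the edge-by-edge cancellation ($s^R = s^L$ on each $G$-factor) and the quasitriangularity of $r_\sGam$ as a direct sum, which the paper leaves implicit.
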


\begin{proof}
The symmetric part of $r_{\sGam}$ is
\begin{gather*}
s_{\sGam} = \sum_{\gamma \in \Gamma_1} (s)_{\alpha_\gamma} - (s)_{\calpha_\gamma},
\end{gather*}
where for $\gamma \in \Gamma_1$, $\alpha_\gamma \in \Gamma_{1/2}$ is the source half edge of $\gamma$. By \eqref{sigma}, one has $\sigma_{\sGam} (s_{\sGam}) = 0$, thus Theorem \ref{main-thm} follows from Proposition \ref{Poi-def-r}.
\end{proof}

\begin{Remark}Let $\Lambda_{\sGam}$ be the anti-symmetric part of the quasitriangular $r$-matrix $r_\sGam$. The bivector f\/ield
\begin{gather*}
\pi_\sGam = \sigma_\sGam(\Lambda_\sGam)
\end{gather*}
f\/irst appeared in \cite{FR2, FR1}, where the proof that it is Poisson was left as a computation to be checked. Theorem \ref{main-thm} gives a~simpler and more conceptual proof that $\pi_\Gamma$ is a Poisson structure.
\end{Remark}

Consider the quasitriangular Lie bialgebra
\begin{gather}
\big(\g^{\sV}, r\big) = \bigoplus_{v \in V} (\g, r_v), \qquad \text{where} \qquad r = \sum_{v \in V} (r_v)_v \in \g^{\sV} \otimes \g^{\sV}. \label{small-lie-bialg}
\end{gather}
For $v \in V$, let $\diag_v\colon \g \to \g^{\sGam_v}$, $\diag_v(x) = \sum_{\gamma \in \Gamma_v} (x)_\gamma$, for $x \in \g$, and def\/ine the map
\begin{gather}
\diag_{\sGam}\colon \ \g^{\sV} \to \g^{\sGam_{1/2}} = \bigoplus_{v \in V} \g^{\sGam_v}, \qquad \diag_{\sGam}(x) = \sum_{v \in V} (\diag_v(x_v))_v, \qquad x \in \g^{\sV}. \label{defn-Delta}
\end{gather}
By Theorem \ref{r^(n)}, $\diag_\sGam\colon (\g^{\sV}, \delta_r) \to (\g^{\sGam_{1/2}}, \delta_{r_{\sGam}})$ is an embedding of Lie bialgebras, and by Lemma~\ref{big-action}, one has
\begin{gather}
\rho_{\sV} = I_\Gamma \circ \sigma_{\sGam} \circ \diag_{\sGam}, \label{annoyingrelation}
\end{gather}
where $\rho_{\sV}\colon \g^\sV \to \XX^1(\A(\Sigma, V))$ is the derivative of the action by gauge transformations in \eqref{lambda}. Thus as an immediate consequence, one has

\begin{Corollary} \label{cor-gauge-action}
The triple $(\A(\Sigma, V), I_\Gamma(\pi_{\sGam}), \rho_{\sV})$ is a right $(\g^{\sV}, r)$-Poisson space.
\end{Corollary}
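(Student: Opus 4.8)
The plan is to deduce Corollary~\ref{cor-gauge-action} directly from the combination of Theorem~\ref{main-thm}, Theorem~\ref{r^(n)}, and the identity~\eqref{annoyingrelation}. First I would record that, by Theorem~\ref{main-thm}, $\pi_{\sGam} = \sigma_{\sGam}(r_{\sGam})$ is a Poisson structure on $G^{\Gamma_1}$ that is \emph{defined by} the quasitriangular $r$-matrix $r_{\sGam}$ and the action $\sigma_{\sGam}$; by Proposition~\ref{Poi-def-r}, the triple $(G^{\Gamma_1}, \pi_{\sGam}, \sigma_{\sGam})$ is in fact a right $(\g^{\sGam_{1/2}}, r_{\sGam})$-Poisson space. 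Since $I_\Gamma\colon G^{\Gamma_1} \to \A(\Sigma, V)$ is a diffeomorphism, transporting everything along $I_\Gamma$ shows that $(\A(\Sigma, V), I_\Gamma(\pi_{\sGam}), I_\Gamma \circ \sigma_{\sGam})$ is again a right $(\g^{\sGam_{1/2}}, r_{\sGam})$-Poisson space.

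Next I would invoke the fact, already stated in the excerpt just above the corollary, that $\diag_{\sGam}\colon (\g^{\sV}, \delta_r) \to (\g^{\sGam_{1/2}}, \delta_{r_{\sGam}})$ is an embedding of Lie bialgebras (this is Theorem~\ref{r^(n)} applied coordinatewise, together with the definition~\eqref{defn-r_sgam} of $r_{\sGam}$). The general principle here is that precomposing a Poisson action of a Lie bialgebra $(\h, \delta_\h)$ with a morphism of Lie bialgebras $(\h', \delta_{\h'}) \to (\h, \delta_\h)$ yields a Poisson action of $(\h', \delta_{\h'})$: indeed, if $\rho\colon \h \to \XX^1(Y)$ satisfies $[\rho(x), \piY] = \rho(\delta_\h(x))$ for all $x \in \h$, and $f\colon \h' \to \h$ is a Lie bialgebra morphism, then $\rho \circ f$ is a Lie algebra morphism and $[\rho(f(x')), \piY] = \rho(\delta_\h(f(x'))) = \rho(f(\delta_{\h'}(x'))) = (\rho \circ f)(\delta_{\h'}(x'))$, using that $f$ intertwines the cobrackets. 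Applying this with $\h = \g^{\sGam_{1/2}}$, $\h' = \g^{\sV}$, $f = \diag_{\sGam}$, $Y = \A(\Sigma, V)$, $\piY = I_\Gamma(\pi_{\sGam})$, and $\rho = I_\Gamma \circ \sigma_{\sGam}$ shows that $(\A(\Sigma, V), I_\Gamma(\pi_{\sGam}), I_\Gamma \circ \sigma_{\sGam} \circ \diag_{\sGam})$ is a right $(\g^{\sV}, \delta_r)$-Poisson space, hence a right $(\g^{\sV}, r)$-Poisson space in the terminology of~\eqref{small-lie-bialg}.

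Finally I would close the argument by substituting the identity~\eqref{annoyingrelation}, namely $\rho_{\sV} = I_\Gamma \circ \sigma_{\sGam} \circ \diag_{\sGam}$, which identifies the action just produced with the gauge action $\rho_{\sV}$. This gives exactly the claim that $(\A(\Sigma, V), I_\Gamma(\pi_{\sGam}), \rho_{\sV})$ is a right $(\g^{\sV}, r)$-Poisson space. I do not anticipate a genuine obstacle here: the corollary is essentially a formal concatenation of results already in hand, and the only point requiring any care is the elementary observation that a Poisson action restricts along a Lie bialgebra embedding to a Poisson action — which is why the excerpt calls it an ``immediate consequence''. One could alternatively phrase the whole proof in one line by citing \cite{Lu-Mou:mixed} for the restriction property and the three displayed facts; I would keep the slightly more explicit version above so that the reader sees precisely how~\eqref{annoyingrelation} and the Lie bialgebra embedding $\diag_{\sGam}$ enter.
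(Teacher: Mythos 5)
Your proposal is correct and follows essentially the same route as the paper, which derives the corollary as an immediate consequence of Proposition~\ref{Poi-def-r} (via Theorem~\ref{main-thm}), the Lie bialgebra embedding $\diag_{\sGam}\colon (\g^{\sV}, \delta_r) \to (\g^{\sGam_{1/2}}, \delta_{r_{\sGam}})$ from Theorem~\ref{r^(n)}, and the identity~\eqref{annoyingrelation}. You merely make explicit the standard fact, left implicit in the paper, that a Poisson action pulls back along a morphism of Lie bialgebras.
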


\begin{Example}
For any integer $n \geq 1$, let $(\Sigma_n, V)$ be a disk with $n-1$ inner disks removed and two marked points $v_1$, $v_2$, such as in Fig.~\ref{sigma_3:fig}, and let $\Gamma$ be the skeleton in Fig.~\ref{sigma_3:fig} with edges oriented from $v_1$ to $v_2$. Let $r_i = s + \Lambda_i$, be the $r$-matrix associated to $v_i$, $i=1,2$. The orientation of $\Sigma$ induces an isomorphism $\g^{\sGam_{v_i}} \cong \g^n$, and one has
\begin{gather*}
r_1^{(\varepsilon_{v_1}, \sGam_{v_1})} = (s, \ldots, s) + (\Lambda_1, \ldots, \Lambda_1) - \Mix^n(r_1) = (s, \ldots, s) + \Lambda_{r_1}^{(n)} \\
r_2^{(\varepsilon_{v_2}, \sGam_{v_2})} = (-s, \ldots, -s) + (\Lambda_2, \ldots, \Lambda_2) - \Mix^n(r_2) = (-s, \ldots, -s) + \Lambda_{r_2}^{(n)},
\end{gather*}
and
\begin{gather*}
I_{\Gamma}(\pi_{\sGam}) = \big(\Lambda_{r_1}^{(n)}\big)^R + \big(\Lambda_{r_2}^{(n)}\big)^L.
\end{gather*}
In particular, if $\Lambda_2 = -\Lambda_1$, the Poisson structure $I_{\Gamma}(\pi_{\sGam}) = \big(\Lambda_{r_1}^{(n)}\big)^R - \big(\Lambda_{r_1}^{(n)}\big)^L$ is multiplicative, and the Poisson Lie group
\begin{gather*}
\big(G^n, \big(\Lambda_{r_1}^{(n)}\big)^R - \big(\Lambda_{r_1}^{(n)}\big)^L\big)
\end{gather*}
is then called a {\it polyuble} in~\cite{FR2}, and has Lie bialgebra $\big(\g^n, \delta_{r_1}^{(n)}\big)$. When $n = 2$ and $r_1$ is {\it factorizable}, that is when $s^\sharp\colon \g^* \to \g$ is invertible, $\big(G^2, (\Lambda_{r_1}^{(2)})^R - (\Lambda_{r_1}^{(2)})^L\big)$ is isomorphic to the double of the Poisson Lie group $\big(G, \Lambda_1^R - \Lambda_1^L\big)$~\cite{Lu-Mou:mixed}. Thus polyubles are generalizations of doubles of Poisson Lie groups.

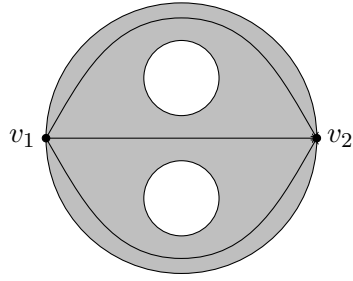
\begin{figure}\centering
\begin{tikzpicture}
\pgfmathsetmacro{\ra}{0.5}
\pgfmathsetmacro{\rb}{1.8}
\pgfmathsetmacro{\ry}{0.8}
\draw [fill=lightgray] (0,0) circle [radius=\rb];
\draw [fill=white] (0,\ry) circle [radius=\ra];
\draw [fill=white] (0,-\ry) circle [radius=\ra];
\draw [fill=black] (-\rb,0) circle [radius=0.05];
\draw [fill=black] (\rb, 0) circle [radius=0.05];
\node [left] at (-\rb,0) {$v_1$};
\node [right] at (\rb, 0) {$v_2$};
\draw [->] (180:\rb) to [out=60, in=180] (90:1.6) to [out=0, in=120] (0:\rb);
\draw [->] (180:\rb) to [out=300, in=180] (270:1.6) to [out=0, in=240] (0:\rb);
\draw [->] (180:\rb) to (\rb, 0);
\end{tikzpicture}
\caption{The marked surface $(\Sigma_3, V)$.} \label{sigma_3:fig}
\end{figure}
\end{Example}

\subsection{Independence of choice of skeleton} \label{subsec-indep}

Continuing with the setup and notation of Section~\ref{subsec-FR}, one has a Poisson structure $I_\Gamma(\pi_{\sGam})$ on~$\A(\Sigma,V)$. The goal of this subsection is to show that $I_\Gamma(\pi_{\sGam})$ does not depend on the choice of~$\Gamma$, nor on the choice of an orientation of the edges of~$\Gamma$. Letting $\Gamma'$ be another oriented skeleton of~$(\Sigma,V)$, this is equivalent to proving that the map
\begin{gather}
I_{\Gamma'}^{-1} \circ I_{\Gamma}\colon \ \big (G^{\Gamma_1}, \pi_{\sGam}\big) \longrightarrow \big(G^{\Gamma'_1}, \pi_{\sGam'}\big) \label{IcircI}
\end{gather}
is a Poisson isomorphism.

\begin{Lemma} \label{lem-orient}
The Poisson structure $I_\Gamma(\pi_{\sGam})$ is independent of the orientation of the edges of $\Gamma$.
\end{Lemma}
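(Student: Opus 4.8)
The plan is to reduce the claim to a statement about a single edge $\gamma$ and then check it by a direct computation in the coordinates of \eqref{sigma}. Changing the orientation of one edge $\gamma \in \Gamma_1$ amounts to composing $I_\Gamma$ with the diffeomorphism $\iota_\gamma \colon G^{\Gamma_1} \to G^{\Gamma_1}$ which inverts the $\gamma$-component, $\iota_\gamma(g)_{\gamma'} = g_{\gamma'}$ for $\gamma' \neq \gamma$ and $\iota_\gamma(g)_\gamma = g_\gamma^{-1}$. Moreover this reversal swaps the two half edges $\alpha_\gamma$ and $\calpha_\gamma$ as to which is the source, so it also changes the sign function $\varepsilon_v$ at the two endpoints of $\gamma$ (at each of those vertices exactly one half edge flips sign), and correspondingly changes $r_\sGam$ to some $r'_\sGam$. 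So it suffices to show that $\iota_\gamma \colon (G^{\Gamma_1}, \sigma_\sGam(r_\sGam)) \to (G^{\Gamma_1}, \sigma_\sGam(r'_\sGam))$ is a Poisson isomorphism, i.e.\ that $(\iota_\gamma)_* \sigma_\sGam(r_\sGam) = \sigma_\sGam(r'_\sGam)$.

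First I would record how $\iota_\gamma$ interacts with the action $\sigma_\sGam$. Inverting the $\gamma$-coordinate intertwines the right action of $h_{\calpha_\gamma}$ with a left action and vice versa: concretely, $\iota_\gamma$ conjugates the vector fields $\sigma_\sGam(x)_{\alpha}$ for a half edge $\alpha$ incident to $\gamma$ into $\pm$ the corresponding invariant vector fields for the reversed edge, since for the group $G$ the inversion map sends the right-invariant field $x^R$ to $-x^L$ and the left-invariant field $x^L$ to $-x^R$. For half edges not incident to $\gamma$, $\iota_\gamma$ commutes with the action. Pushing $\sigma_\sGam(r_\sGam)$ forward under $\iota_\gamma$ therefore has the effect, on the two tensor factors $\g^{\sGam_{v}}$ at the endpoints $v$ of $\gamma$, of swapping the roles of the two relevant slots and inserting signs exactly matching the change $\varepsilon_v \mapsto \varepsilon'_v$; on all other factors it is the identity. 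The key algebraic input is that $r^{(\varepsilon,n)}$ is, up to this relabelling of slots and signs, covariant under permuting which slot is ``source'' — this is essentially the content of Theorem~\ref{r^(n)} together with the explicit formula for $\Mix^n(r)$, and one uses that the symmetric part $s$ is $\g$-invariant so that the $(s)_{\alpha_\gamma}-(s)_{\calpha_\gamma}$ discrepancy is killed by $\sigma_\sGam$ as in the proof of Theorem~\ref{main-thm}.

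I would then assemble these two observations: $(\iota_\gamma)_* \sigma_\sGam(r_\sGam) = \sigma_\sGam(r'_\sGam)$ because on each tensor slot the pushforward of the vector fields and the relabelling of $r_\sGam$ into $r'_\sGam$ agree, and both $\sigma_\sGam(r_\sGam)$ and $\sigma_\sGam(r'_\sGam)$ are genuine bivector fields (their symmetric parts act as zero), so no correction terms appear. Since any reorientation of $\Gamma$ is a composition of single-edge reversals $\iota_\gamma$, and each is a Poisson isomorphism onto the structure for the reoriented skeleton, $I_\Gamma(\pi_\sGam) \in \XX^2(\A(\Sigma,V))$ is unchanged. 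An alternative, perhaps cleaner, route is to invoke Corollary~\ref{cor-gauge-action} and the fusion description: $(G^{\Gamma_1},\pi_\sGam)$ is the fusion product of the marked-disk Poisson spaces over the edges, and reversing an edge $\gamma$ replaces the $\gamma$-th factor $(G, \Lambda_{r}^R + (-\Lambda_{r})^L)$-type space by its ``opposite'', which is Poisson isomorphic via group inversion; since fusion \eqref{eq-fusion} is functorial for Poisson isomorphisms of the factors that are equivariant for $\diag_n$, the fused structures are isomorphic.

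The main obstacle I expect is purely bookkeeping: carefully tracking, at a vertex $v$ incident to $\gamma$, how the linear order on $\Gamma_v$ interacts with the swap of source/end status of one half edge and the resulting transposition of tensor slots inside $r_v^{(\varepsilon_v,\sGam_v)}$, and confirming that the $\Mix$ terms transform correctly under this transposition. This is where one must be sure that the sign conventions in $\varepsilon_v$ and in the $x^R \leftrightarrow -x^L$ rule line up exactly; once that is verified the rest is formal.
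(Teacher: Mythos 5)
Your proposal is correct and takes essentially the same route as the paper: reduce to reversing a single edge $\gamma$, observe that $I_{\Gamma'}^{-1}\circ I_\Gamma$ is group inversion in the $\gamma$-factor, and use that the only resulting change in the $r$-matrix lies in its symmetric part (via the sign functions $\varepsilon_v$), which the action annihilates as in Theorem~\ref{main-thm}. The paper states the first step a bit more cleanly as exact equivariance, $\big(I_{\Gamma'}^{-1}\circ I_\Gamma\big)_*\sigma_{\sGam}(x)=\sigma_{\sGam'}(x)$ for every $x\in\g^{\sGam_{1/2}}$ --- the rule $x^R\mapsto -x^L$ turns the $\alpha_\gamma$-generator of $\sigma_{\sGam}$ into the $\alpha_\gamma$-generator of $\sigma_{\sGam'}$, so no slot swaps or compensating signs actually occur --- leaving only the symmetric discrepancy $(s)_{\alpha_\gamma}-(s)_{\calpha_\gamma}$, which is killed exactly as you say.
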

\begin{proof}
One can assume that $\Gamma'$ is the same oriented skeleton as $\Gamma$, except for an edge $\gamma \in \Gamma_1$, which is given the opposite orientation. The map $I_{\Gamma'}^{-1} \circ I_{\Gamma}\colon G^{\Gamma_1} \to G^{\Gamma'_1} = G^{\Gamma_1}$ is thus the group inversion in the factor $\gamma$, and the identity on all other factors, hence for any $x \in \g^{\sGam_{1/2}}$, one has
\begin{gather*}
I_{\Gamma'}^{-1} \circ I_{\Gamma}(\sigma_{\sGam}(x)) = \sigma_{\sGam'}(x),
\end{gather*}
which implies that $I_{\Gamma'}^{-1} \circ I_{\Gamma}(\pi_{\sGam}) = \pi_{\sGam'}$.
\end{proof}

\begin{Lemma} \label{lem-indep}
Consider the following two oriented skeletons
\begin{center}
\begin{tikzpicture}
\pgfmathsetmacro{\ra}{1.5}
\pgfmathsetmacro{\rc}{2.5}
\begin{scope}[shift={(-4,0)}]
\draw [fill=black] (0:\ra) circle [radius=0.04];
\draw [fill=black] (180: \ra) circle [radius=0.04];
\draw [fill=black] (270:\ra) circle [radius=0.04];
\draw [->, line width=1pt] (0:\ra) -- (180: \ra);
\draw [->, line width=1pt] (180: \ra) -- (270:\ra);
\node [above] at (0:\ra) {$v_1$};
\node [above] at (180:\ra) {$v_2$};
\node [below] at (270:\ra) {$v_3$};
\node at (90: 0.3) {$\gamma_1$};
\node at (225: 1.5) {$\gamma_2$};
\node at (270: \rc) {$\Gamma$};
\end{scope}
\begin{scope}[shift={(4,0)}]
\draw [fill=black] (0:\ra) circle [radius=0.04];
\draw [fill=black] (180: \ra) circle [radius=0.04];
\draw [fill=black] (270:\ra) circle [radius=0.04];
\draw [->, line width=1pt] (0:\ra) -- (270:\ra);
\draw [->, line width=1pt] (180: \ra) -- (270:\ra);
\node [above] at (0:\ra) {$v_1$};
\node [above] at (180:\ra) {$v_2$};
\node [below] at (270:\ra) {$v_3$};
\node at (315: 1.5) {$\gamma_1'$};
\node at (225: 1.5) {$\gamma_2$};
\node at (270: \rc) {$\Gamma'$};
\end{scope}
\end{tikzpicture}
\end{center}
of a disk with three marked points. Then the map~\eqref{IcircI} is a Poisson isomorphism.
\end{Lemma}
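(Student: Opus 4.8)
The plan is to reduce this local change to the two moves that are easy to analyze directly. The two skeletons $\Gamma$ and $\Gamma'$ differ precisely by a local change of the type displayed in~\eqref{loc-move}: in $\Gamma$ the edge $\gamma_1$ joins $v_1$ to $v_2$ and $\gamma_2$ joins $v_2$ to $v_3$, while in $\Gamma'$ the edge $\gamma_1'$ joins $v_1$ to $v_3$ and $\gamma_2$ still joins $v_2$ to $v_3$. By Lemma~\ref{lem-orient} the Poisson structures $I_\Gamma(\pi_\sGam)$ and $I_{\Gamma'}(\pi_{\sGam'})$ do not depend on the chosen orientations, so I am free to pick whichever orientations make the bookkeeping cleanest; I would orient both $\gamma_1$ and $\gamma_2$ out of $v_1$ (resp.\ $v_1$ and $v_2$) into $v_2$ (resp.\ $v_3$), exactly as drawn.

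The heart of the matter is to identify the diffeomorphism $I_{\Gamma'}^{-1}\circ I_\Gamma\colon G^{\Gamma_1}\to G^{\Gamma'_1}$ explicitly. Writing an element of $G^{\Gamma_1}$ as $(g_1,g_2)$ with $g_i=\operatorname{ev}_{\gamma_i}$, and an element of $G^{\Gamma'_1}$ as $(g_1',g_2')$, the change of generating set of the free groupoid $\Pi_1(\Sigma,V)$ gives a monomial change of coordinates: with my orientation choices one has $g_2'=g_2$ and $g_1'=g_1 g_2$ (the path from $v_1$ to $v_3$ is the concatenation of $\gamma_1$ and $\gamma_2$). So $I_{\Gamma'}^{-1}\circ I_\Gamma$ is the map $(g_1,g_2)\mapsto(g_1g_2,\,g_2)$. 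The remaining task is to check that this map intertwines $\pi_\sGam$ and $\pi_{\sGam'}$. Here I would use that both are of the form $\sigma(r_\sGam)$ for the relevant half-edge action and $r$-matrix, together with the description of $r_\sGam$ in~\eqref{defn-r_sgam} as a sum over the three vertices of the building blocks $r_{v_i}^{(\varepsilon_{v_i},\sGam_{v_i})}$. At $v_1$ the half-edge set is a singleton in both skeletons (only the source half edge of $\gamma_1$, resp.\ of $\gamma_1'$), and similarly at $v_2$; only at $v_3$ does the linear order of the two incident half edges come into play, and the local change~\eqref{loc-move} preserves it. Concretely, the pushforward of the half-edge action $\sigma_\sGam$ under $(g_1,g_2)\mapsto(g_1g_2,g_2)$ is computed from~\eqref{sigma} by the chain rule: left translation on the $\gamma_2$ factor commutes past the multiplication, and right translation on the $\gamma_1$ factor by $h_{\check\alpha}$ at $v_2$ becomes, after multiplying by $g_2$, a left translation of $g_1g_2$ by $h_{\check\alpha}^{-1}$ composed with the same right translation of $g_2$ — which is exactly the half-edge action of $v_2$ in $\Gamma'$ where that half edge is now the source half edge of $\gamma_1'$. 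Tracking all three vertices this way shows $(I_{\Gamma'}^{-1}\circ I_\Gamma)_*\,\sigma_\sGam(x)=\sigma_{\sGam'}(\theta(x))$ for an appropriate identification $\theta$ of the half-edge Lie algebras, and that $\theta(r_\sGam)=r_{\sGam'}$, whence $(I_{\Gamma'}^{-1}\circ I_\Gamma)_*\pi_\sGam=\pi_{\sGam'}$.

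The step I expect to be the main obstacle is the careful matching of the half-edge data at the shared vertex $v_3$ under the monomial coordinate change: one must verify that the orientation sign function $\varepsilon_{v_3}$, the linear order on $\Gamma_{v_3}$, and hence the building block $r_{v_3}^{(\varepsilon_{v_3},\sGam_{v_3})}$ (which involves $\Mix$ terms mixing the two half edges at $v_3$) are transported correctly — this is where the $\Mix$ correction in~\eqref{defn-r^(n)} interacts nontrivially with the group multiplication $g_1\mapsto g_1g_2$. Once the bookkeeping at $v_3$ is pinned down, the vertices $v_1$ and $v_2$ contribute only translation-type terms and cause no difficulty. I would organize the computation by writing $\pi_\sGam$ as a sum of the contributions $\sigma_\sGam((r_{v_i}^{(\varepsilon_{v_i},\sGam_{v_i})})_{v_i})$ for $i=1,2,3$ and checking the intertwining relation one vertex at a time, so that the only genuinely new verification is the one for $v_3$.
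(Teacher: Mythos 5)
Your identification of the coordinate change $I_{\Gamma'}^{-1}\circ I_\Gamma(g_1,g_2)=(g_1g_2,g_2)$ is correct, but the mechanism you propose for transporting the Poisson structure does not work, and it starts from a combinatorial misreading. In $\Gamma$ the vertex with two incident half edges is $v_2$ (the target half edge of $\gamma_1$ and the source half edge of $\gamma_2$), while $v_3$ carries a single half edge; after the local move it is $v_3$ that carries two half edges and $v_2$ only one. Consequently the $\Mix$-correction in $r_\sGam$ from \eqref{defn-r_sgam} sits at $v_2$ and is built from $r_{v_2}$, whereas in $r_{\sGam'}$ it sits at $v_3$ and is built from $r_{v_3}$. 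Since $\Lambda_{v_2}$ and $\Lambda_{v_3}$ are unrelated (only the symmetric parts agree), no relabelling $\theta$ of half edges can satisfy $\theta(r_\sGam)=r_{\sGam'}$, so the equality $I(\pi_\sGam)=\pi_{\sGam'}$ cannot be reduced to an isomorphism of the pairs (action, $r$-matrix) as you intend.

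The claimed intertwining $(I_{\Gamma'}^{-1}\circ I_\Gamma)_*\sigma_\sGam(x)=\sigma_{\sGam'}(\theta(x))$ also fails: for $x\in\g$ placed at the half edge of $\gamma_1$ incident to $v_2$, the field $\sigma_\sGam$ gives $(x^L,0)$, and its pushforward under $(g_1,g_2)\mapsto(g_1g_2,g_2)$ is $\big((\Ad_{g_2^{-1}}x)^L,0\big)$, a point-dependent field that is not $\sigma_{\sGam'}(y)$ for any fixed $y\in\g^{\Gamma'_{1/2}}$. Only the diagonal (gauge) subalgebra $\diag_\sGam(\g^\sV)$ is intertwined, which is Lemma~\ref{big-action}/\eqref{annoyingrelation} and is not enough here. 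The paper instead pushes the bivector forward directly: writing $\pi_\sGam=(\Lambda_{v_1}^R,\Lambda_{v_3}^L)+(\Lambda_{v_2}^L,\Lambda_{v_2}^R)+\sum_i(0,y_i^R)\wedge(x_i^L,0)$ with $r_{v_2}=\sum_i x_i\otimes y_i$, one computes $I$ of each term and uses $\Ad_g(s)=s$ to convert the conjugation terms produced by the $v_2$-contributions into $-\Mix^2(s)^L$, which then combines with the transported $\Lambda_{v_3}$-terms to give $-\Mix^2(r_{v_3})^L$, yielding $(\Lambda_{v_1}^R,\Lambda_{v_2}^R)+(\Lambda_{v_3}^L,\Lambda_{v_3}^L)-\Mix^2(r_{v_3})^L=\pi_{\sGam'}$. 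This cancellation, which relies on the invariance of $s$ and on all $r_v$ sharing the same symmetric part, is the actual substance of the lemma and is absent from your argument.
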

\begin{proof}Identifying $G^{\Gamma_1}$ and $G^{\Gamma_1'}$ with $G^2$ and writing $I = I_{\Gamma'}^{-1} \circ I_{\Gamma}$, one has $I(g_1, g_2) = (g_1g_2, g_2)$, $g_1, g_2 \in G$, and
\begin{gather*}
\pi_{\sGam} = \big(\Lambda_{v_1}^R, \Lambda_{v_3}^L\big) + \big(\Lambda_{v_2}^L, \Lambda_{v_2}^R\big) + \sum_i \big(0, y_i^R\big) \wedge \big(x_i^L, 0\big),
\end{gather*}
where $r_{v_2} = \sum_i x_i \otimes y_i$. A direct calculation using $\Ad_g(s) = s$ for any $g \in G$, shows that
\begin{gather*}
\big(\Lambda_{v_1}^R, 0\big) = I\big( \Lambda_{v_1}^R, 0\big), \\
\big(\Lambda_{v_3}^L, \Lambda_{v_3}^L\big) - \Mix^2(\Lambda_{v_3})^L = I\big(0, \Lambda_{v_3}^L\big), \\
\big(0, \Lambda_{v_2}^R\big) - \Mix^2(s)^L = I \bigg(\big(\Lambda_{v_2}^L, \Lambda_{v_2}^R\big)+ \sum_i \big(0, y_i^R\big) \wedge \big(x_i^L, 0\big) \bigg),
\end{gather*}
from which one gets $I(\pi_{\sGam}) = \big(\Lambda_{v_1}^R, \Lambda_{v_2}^R\big) + \big(\Lambda_{v_3}^L, \Lambda_{v_3}^L\big) - \Mix^2(r_{v_3})^L = \pi_{\sGam'}$.
\end{proof}

We return to the case of a general marked surface $(\Sigma, V)$.

\begin{Proposition} \label{indep-thm}Let $\Gamma, \Gamma'$ be oriented skeletons of $(\Sigma, V)$. Then the map \eqref{IcircI} is a Poisson isomorphism.
\end{Proposition}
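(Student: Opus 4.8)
The plan is to reduce the general case to the two local moves already handled. By Proposition~\ref{ex-unique}, any two skeletons $\Gamma$ and $\Gamma'$ of $(\Sigma, V)$ differ by a finite sequence of isomorphisms and local changes of the form \eqref{loc-move}. Since the Poisson structure $I_\Gamma(\pi_{\sGam})$ is manifestly natural with respect to isomorphisms of ciliated graphs (the diffeomorphism $I_\Gamma$, the action $\sigma_\sGam$, and the $r$-matrix $r_\sGam$ are all defined purely in terms of the combinatorial data of $\Gamma$), and by Lemma~\ref{lem-orient} it is independent of the chosen orientation of the edges, it suffices to treat the case where $\Gamma'$ is obtained from $\Gamma$ by a single local change \eqref{loc-move}, with some fixed choice of orientation on each.

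The key observation is that the local move \eqref{loc-move} takes place inside a disk with three marked points embedded in $\Sigma$: there is an open sub-surface $(\Sigma_0, V_0)$ of $(\Sigma, V)$, with $V_0 = \{v_1, v_2, v_3\} \subset V$, such that $\Gamma$ restricts to the left-hand skeleton of Lemma~\ref{lem-indep} on $\Sigma_0$ and agrees with $\Gamma'$ outside, while $\Gamma'$ restricts to the right-hand skeleton. More precisely, writing $(\Sigma, V)$ as a fusion of marked disks according to the edges of $\Gamma$, the two edges $\gamma_1, \gamma_2$ involved in the move contribute a factor on which the situation is exactly that of Lemma~\ref{lem-indep}, and all remaining edges contribute identically to $\Gamma$ and $\Gamma'$. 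First I would make this precise by factoring $I_{\Gamma'}^{-1} \circ I_\Gamma$ as a product: on the two distinguished $G$-factors it is the map $(g_1, g_2) \mapsto (g_1 g_2, g_2)$ of Lemma~\ref{lem-indep}, and on every other factor it is the identity.

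Next I would decompose the Poisson bivector $\pi_\sGam$ accordingly. Using the definition \eqref{defn-r_sgam} of $r_\sGam$ as $\sum_{v \in V} (r_v^{(\varepsilon_v, \sGam_v)})_v$ and the formula \eqref{sigma} for $\sigma_\sGam$, the bivector $\pi_\sGam$ splits into a sum of terms indexed by the vertices $v \in V$; only the three terms attached to $v_1, v_2, v_3$ differ between $\Gamma$ and $\Gamma'$, since the cilium order at $v_1$ and $v_2$ changes (one half edge is reattached) while the order at $v_3$ is unaffected but the edge incident to it is replaced. The terms attached to all other vertices involve only the untouched factors and are transported to the corresponding terms of $\pi_{\sGam'}$ by the identity part of $I_{\Gamma'}^{-1} \circ I_\Gamma$. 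For the three distinguished vertices, the computation is identical to the one carried out in the proof of Lemma~\ref{lem-indep}, in particular the three displayed identities there (which use only $\Ad_g(s) = s$), applied factor-by-factor. Combining these, one obtains $I_{\Gamma'}^{-1} \circ I_\Gamma (\pi_\sGam) = \pi_{\sGam'}$.

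The main obstacle I anticipate is the bookkeeping required to make the ``restriction to a sub-surface'' argument rigorous: one must verify that the fusion decomposition of $(\Sigma, V)$ induced by $\Gamma$ can be chosen compatibly with that induced by $\Gamma'$, so that the two differ only in the block corresponding to Lemma~\ref{lem-indep}, and that the mixed terms $\Mix$ coupling the distinguished factors with the others behave correctly under $I_{\Gamma'}^{-1} \circ I_\Gamma$. This is where the associativity (though not commutativity) of fusion, noted after Lemma~\ref{fusion-graph-surf}, is essential: it lets one group the fusion operations so that the three disks involved in the local move are fused first, reducing everything to Lemma~\ref{lem-indep} followed by fusion with a common remainder, on which $I_{\Gamma'}^{-1}\circ I_\Gamma$ is the identity.
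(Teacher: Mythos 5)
Your plan is correct and follows essentially the same route as the paper: reduce, via Proposition~\ref{ex-unique} and Lemma~\ref{lem-orient}, to the case where $\Gamma$ and $\Gamma'$ differ by a single local change \eqref{loc-move} (with extra half edges allowed at the three affected vertices), and then conclude with Lemma~\ref{lem-indep} together with a computation handling the remaining factors and mixed terms. The bookkeeping you flag as the main obstacle is precisely what the paper also leaves to the reader as ``a straightforward calculation,'' so your proposal matches the published argument in both structure and level of detail.
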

\begin{proof}
By Lemma \ref{lem-orient} and Proposition \ref{ex-unique}, one can assume that $\Gamma$ and $\Gamma'$ have the following form:
\begin{center}
\begin{tikzpicture}
\pgfmathsetmacro{\ra}{1.5}
\pgfmathsetmacro{\rb}{2.5}
\pgfmathsetmacro{\rc}{3}
\begin{scope}[shift={(-4,0)}]
\draw [fill=black] (0:\ra) circle [radius=0.06];
\draw [fill=black] (180: \ra) circle [radius=0.06];
\draw [fill=black] (270:\ra) circle [radius=0.06];
\draw [->, line width=1pt] (0:\ra) -- (180: \ra);
\draw [->, line width=1pt] (180: \ra) -- (270:\ra);
\draw (180: \ra) -- (170:\rb);
\draw (180: \ra) -- (190:\rb);
\draw (270:\ra) --(245:\rb);
\draw (270:\ra) --(290:\rb);
\draw (0:\ra) -- (10:\rb);
\draw (0:\ra) -- (350:\rb);
\node [above] at (0:\ra) {$v_1$};
\node [above] at (180:\ra) {$v_2$};
\node [below] at (270:\ra) {$v_3$};
\node at (90: 0.3) {$\gamma_1$};
\node at (225: 1.5) {$\gamma_2$};
\node at (270: \rc) {$\Gamma$};
\end{scope}
\begin{scope}[shift={(4,0)}]
\draw [fill=black] (0:\ra) circle [radius=0.06];
\draw [fill=black] (180: \ra) circle [radius=0.06];
\draw [fill=black] (270:\ra) circle [radius=0.06];
\draw [->, line width=1pt] (0:\ra) -- (270:\ra);
\draw [->, line width=1pt] (180: \ra) -- (270:\ra);
\draw (180: \ra) -- (170:\rb);
\draw (180: \ra) -- (190:\rb);
\draw (270:\ra) --(245:\rb);
\draw (270:\ra) --(290:\rb);
\draw (0:\ra) -- (10:\rb);
\draw (0:\ra) -- (350:\rb);
\node [above] at (0:\ra) {$v_1$};
\node [above] at (180:\ra) {$v_2$};
\node [below] at (270:\ra) {$v_3$};
\node at (315: 1.5) {$\gamma_1'$};
\node at (225: 1.5) {$\gamma_2$};
\node at (270: \rc) {$\Gamma'$};
\end{scope}
\end{tikzpicture}
\end{center}
Using Lemma \ref{lem-indep}, a straightforward calculation, the details of which are left to the readers, shows that $I_{\Gamma'}^{-1} \circ I_{\Gamma}(\pi_{\sGam}) = \pi_{\sGam'}$.
\end{proof}

Recall the quasitriangular $r$-matrix $r$ on $\g^{\sV}$ def\/ined in \eqref{small-lie-bialg} and let
\begin{gather*}
\pi_r = I_\Gamma(\pi_{\sGam}) \in \XX^2(\A(\Sigma, V)),
\end{gather*}
where $\Gamma$ is any oriented skeleton of $(\Sigma, V)$. The following theorem is a consequence of Lemma~\ref{lem-orient}, Proposition~\ref{indep-thm}, and Lemma~\ref{cor-gauge-action}.

\begin{Theorem} \label{main-thm-2}The Poisson structure $\pi_r$ on $\A(\Sigma, V)$ is independent of the choice of $\Gamma$, and $(\A(\Sigma, V), \pi_r, \rho_{\sV})$ is a right $(\g^{\sV}, r)$-Poisson space.
\end{Theorem}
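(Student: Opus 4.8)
The plan is to obtain Theorem \ref{main-thm-2} by assembling the results of Section \ref{subsec-indep} together with Corollary \ref{cor-gauge-action}. Recall that $\pi_r$ is defined as $I_\Gamma(\pi_{\sGam})$ for a choice of oriented skeleton $\Gamma$ of $(\Sigma, V)$, so the content of the first assertion is precisely that this bivector field on $\A(\Sigma, V)$ does not change when $\Gamma$ is replaced by any other oriented skeleton $\Gamma'$, nor when the orientation of the edges is changed.

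First I would use Lemma \ref{lem-orient} to dispose of the dependence on the orientation of the edges: since flipping the orientation of a single edge does not alter $I_\Gamma(\pi_{\sGam})$, the bivector field $\pi_r$ depends at most on the underlying unoriented skeleton, and one is free to choose convenient orientations when comparing two skeletons. Then Proposition \ref{indep-thm} shows that for any two oriented skeletons $\Gamma$ and $\Gamma'$ the diffeomorphism $I_{\Gamma'}^{-1}\circ I_\Gamma\colon (G^{\Gamma_1}, \pi_{\sGam}) \to (G^{\Gamma'_1}, \pi_{\sGam'})$ is a Poisson isomorphism, that is, $I_\Gamma(\pi_{\sGam}) = I_{\Gamma'}(\pi_{\sGam'})$ as bivector fields on $\A(\Sigma, V)$. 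This establishes that $\pi_r$ is well defined, independent of $\Gamma$.

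For the second assertion, once $\pi_r = I_\Gamma(\pi_{\sGam})$ has been identified with a single well-defined bivector field, the claim that $(\A(\Sigma, V), \pi_r, \rho_{\sV})$ is a right $(\g^{\sV}, r)$-Poisson space is exactly the statement of Corollary \ref{cor-gauge-action}, obtained there from the compatibility relation \eqref{annoyingrelation} and the fact, via Theorem \ref{r^(n)}, that $\diag_{\sGam}$ is an embedding of Lie bialgebras.

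The hard part is not in this assembly but already lies in Proposition \ref{indep-thm}, which reduces via the uniqueness-up-to-local-moves statement of Proposition \ref{ex-unique} to a single local change, and hence to the explicit computation in Lemma \ref{lem-indep} using the $\g$-invariance $\Ad_g(s)=s$. With those inputs in hand, the proof of Theorem \ref{main-thm-2} is just the combination of Lemma \ref{lem-orient}, Proposition \ref{indep-thm}, and Corollary \ref{cor-gauge-action}, and nothing further is required.
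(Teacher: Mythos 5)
Your proposal is correct and matches the paper's own argument, which likewise deduces Theorem \ref{main-thm-2} directly from Lemma \ref{lem-orient}, Proposition \ref{indep-thm}, and Corollary \ref{cor-gauge-action}. Nothing further is needed.
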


\subsection{Fusion of Poisson spaces and marked surfaces} \label{subsec-fus}

We continue with the setup of Section~\ref{subsec-FR}. In particular, $(\Sigma, V)$ is a marked surface, for $v \in V$, one has $\Lambda_v \in \wedge^2 \g$ such that $r_v = s + \Lambda_v$ is a quasitriangular $r$-matrix, and one considers the quasitriangular $r$-matrix $r \in \g^{\sV} \otimes \g^{\sV}$ def\/ined in~\eqref{small-lie-bialg}.

Suppose one has $r_{v_1} = r_{v_2}$ for two distinct elements $v_1, v_2 \in V$, and consider the fused surface $(\Sigma', V') = (\Sigma_{(v_1, v_2)}, V_{v_1 = v_2})$ with Poisson structure $\pi_{r'}$, where $r' \in \g^{\sV'} \otimes \g^{\sV'}$ is def\/ined as in~\eqref{small-lie-bialg}, and let $v' \in V'$ be the vertex obtained by fusing $v_1$ and $v_2$. Recall the fusion of Poisson spaces discussed in Section~\ref{subsec-fusion}.

\begin{Theorem} \label{thm-fuse-mark}One has
\begin{gather*}
\Fus_{(\g, r_{v_1}) \times (\g, r_{v_2})} (\A(\Sigma, V), \pi_r, \rho_\sV) = \big(\A(\Sigma', V'), \pi_{r'}, \rho_{\sV'}\big).
\end{gather*}
\end{Theorem}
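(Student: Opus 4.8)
The plan is to reduce everything to the identification $\varphi_{(v_1,v_2)}$ of the two moduli spaces together with the comparison of the Fock--Rosly bivectors under a good choice of skeleton, and then to recognize the correction term as exactly the $\Mix$-term appearing in the definition \eqref{eq-fusion} of fusion of Poisson spaces. First I would choose a skeleton $\Gamma$ of $(\Sigma,V)$ and orient its edges; by Lemma~\ref{fusion-graph-surf}, the image of $\Gamma$ under the fusion map is a skeleton $\Gamma'$ of $(\Sigma',V')$, and the half-edge sets satisfy $\Gamma'_{1/2}=\Gamma_{1/2}$ with $\Gamma'_{v'}=\Gamma_{v_1}\sqcup\Gamma_{v_2}$ (concatenated linear orders) and $\Gamma'_v=\Gamma_v$ for all other $v$. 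Crucially, since $\Gamma$ and $\Gamma'$ have the same underlying oriented graph, $\A(\Sigma,V)\cong G^{\Gamma_1}\cong\A(\Sigma',V')$ compatibly with $\varphi_{(v_1,v_2)}$, and the actions $\sigma_\Gamma$ and $\sigma_{\Gamma'}$ on $G^{\Gamma_1}$ coincide as maps $\g^{\Gamma_{1/2}}\to\XX^1(G^{\Gamma_1})$, since \eqref{sigma} only refers to source half edges of edges, not to vertices.

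The heart of the argument is then the comparison of the two $r$-matrices $r_\Gamma\in\g^{\Gamma_{1/2}}\otimes\g^{\Gamma_{1/2}}$ and $r_{\Gamma'}\in\g^{\Gamma'_{1/2}}\otimes\g^{\Gamma'_{1/2}}$ defined by \eqref{defn-r_sgam}. They agree on every $\g^{\Gamma_v}$-block with $v\notin\{v_1,v_2\}$, so the whole computation localizes at the block $\g^{\Gamma'_{v'}}=\g^{\Gamma_{v_1}}\oplus\g^{\Gamma_{v_2}}$. Setting $m=|\Gamma_{v_1}|$, $n=|\Gamma_{v_2}|$, and using $r_{v_1}=r_{v_2}=r_v$ (say $r_v=s+\Lambda$), I would apply Lemma~\ref{lem-mix^2}: the antisymmetric part of $r_{\Gamma'}$ restricted to this block is $\Lambda_{r_v}^{(m+n)}$, while that of $r_\Gamma$ is $(\Lambda_{r_v}^{(m)},\Lambda_{r_v}^{(n)})$, and the two differ by $(\diag_m,\diag_n)(\Mix^2(r_v))$. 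The symmetric parts also match up to a $\Mix^2$-of-$s$ correction, but $\sigma_\Gamma(s_\Gamma)=0=\sigma_{\Gamma'}(s_{\Gamma'})$ (proof of Theorem~\ref{main-thm}), so only the antisymmetric discrepancy survives after pushing forward. Pushing through $\sigma_\Gamma$ and recalling that $\rho_\sV=I_\Gamma\circ\sigma_\Gamma\circ\diag_\Gamma$ from \eqref{annoyingrelation}, the correction term $\sigma_\Gamma\big((\diag_m,\diag_n)(\Mix^2(r_v))\big)$ is precisely $\rho_\sV\big(\Mix^2(r_v)\big)$ computed for the $(\g,r_{v_1})\times(\g,r_{v_2})$-action embedded in the $v_1,v_2$ slots of the gauge action. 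Hence $\pi_{r'}=\pi_r-\rho_\sV(\Mix^2(r_v))$, which is exactly the bivector in $\Fus_{(\g,r_{v_1})\times(\g,r_{v_2})}(\A(\Sigma,V),\pi_r,\rho_\sV)$, and the action part $(\rho_\sV\circ\diag_2)\times(\text{rest})$ is the gauge action $\rho_{\sV'}$ by Lemma~\ref{big-action}.

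I expect the main obstacle to be bookkeeping: carefully tracking the linear orders and the blockwise decomposition of $r_\Gamma$ versus $r_{\Gamma'}$ to see that the discrepancy lives only in the $v'$-block and equals exactly $(\diag_m,\diag_n)(\Mix^2(r_v))$ with the correct sign and index placement, and then matching this pushed-forward term with the $\rho_\sV(\Mix^2(r_v))$ occurring in \eqref{eq-fusion} via the diagonal embeddings $\diag_{v_1},\diag_{v_2}$. The sign function $\varepsilon_{v'}$ on $\Gamma'_{v'}$ is the concatenation of $\varepsilon_{v_1}$ and $\varepsilon_{v_2}$, so there is no sign subtlety there, but one must be attentive that $\Mix^{m+n}$ over the concatenated index set splits as $\Mix^m+\Mix^n+(\text{cross terms})$, the cross terms being precisely $(\diag_m,\diag_n)(\Mix^2(r_v))$ — this is the content of Lemma~\ref{lem-mix^2} and is what makes the identity work. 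Once this localization and the application of Lemma~\ref{lem-mix^2} are in place, the rest follows formally from Theorem~\ref{main-thm-2}, \eqref{annoyingrelation}, and the definition \eqref{eq-fusion}.
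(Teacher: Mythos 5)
Your proposal is correct and follows essentially the same route as the paper's proof: identify the moduli spaces via the fused skeleton (Lemma~\ref{fusion-graph-surf}), compare $r_{\sGam}$ with $r_{\sGam'}$ in the block of the fused vertex via Lemma~\ref{lem-mix^2}, convert the resulting correction into $\rho_{\sV}\big(\Mix^2(r_{v_1})\big)$ through~\eqref{annoyingrelation}, and settle the action part with Lemma~\ref{big-action}. The only minor imprecision is that the symmetric parts of $r_{\sGam}$ and $r_{\sGam'}$ agree exactly, since $\varepsilon_{v'}$ is the concatenation of $\varepsilon_{v_1}$ and $\varepsilon_{v_2}$, so no $\Mix^2(s)$ correction arises there; your fallback appeal to $\sigma_{\sGam}(s_{\sGam})=0$ is harmless in any case.
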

\begin{proof}We identify $\A(\Sigma, V)$ and $\A(\Sigma', V')$ using the natural dif\/feomorphism $\varphi_{(v_1, v_2)}$ in~\eqref{varphi_v_1,v_2}, and let $(\A(\Sigma', V'), \pi', \rho')$ be the fusion at $(\g, r_{v_1}) \times (\g, r_{v_2})$ of $(\A(\Sigma, V), \pi_r, \rho_\sV)$. Let $\Gamma$ be an oriented skeleton of $(\Sigma, V)$ and recall from Lemma \ref{fusion-graph-surf} that $\Gamma' = \Gamma_{(v_1, v_2)}$ is an oriented skeleton of $(\Sigma', V')$, and that one has a natural identif\/ication $\Gamma_1 \cong \Gamma'_1$. Recall the map $\diag_{\sGam}\colon \g^{\sV} \to \g^{\sGam_{1/2}}$ def\/ined in \eqref{defn-Delta} and consider its restriction $\diag_{\sGam} |_{\g^{\{v_1, v_2\}}}$ to $\g^{\{v_1, v_2\}}$. Using~\eqref{annoyingrelation} and Lemma~\ref{lem-mix^2}, one has
\begin{gather*}
\pi' = \pi_r - \rho_\sV |_{\g^{\{v_1, v_2\}}}\big(\Mix^2(r_{v_1})\big)
= I_\Gamma \circ \sigma_{\sGam}\big( r_{\sGam} - \diag_{\sGam} |_{\g^{\{v_1, v_2\}}} \big(\Mix^2(r_{v_1})\big)\big)\\
\hphantom{\pi'}{} = I_{\Gamma'} \circ \sigma_{\sGam'}(r_{\sGam'}) = \pi_{r'},
\end{gather*}
and by Lemma \ref{big-action}, one has $\rho' = \rho_{\sV'}$.
\end{proof}

\begin{Example} \label{exa-fused-disk}
Let $(\Sigma, V)$ be a disk with two marked points $v_1$, $v_2$ and assume that the $r$-matrices $r_1$ and $r_2$ associated to~$v_1$ and~$v_2$ are equal. Let the edge of $\Gamma$ be oriented from~$v_1$ to~$v_2$ and identify $\A(\Sigma, V) \cong G$ via $I_\Gamma$, so that one has
\begin{gather*}
\pi_r = \Lambda^R + \Lambda^L,
\end{gather*}
where $\Lambda$ is the anti-symmetric part of $r_1=r_2$.
\begin{center}
\begin{tikzpicture}
\pgfmathsetmacro{\ra}{1.5}
\pgfmathsetmacro{\rb}{1.2}
\pgfmathsetmacro{\rc}{0.9}
\begin{scope}[shift={(-3,0)}]
\draw [fill=lightgray] (160:\ra) arc (160: -160:\ra) to [out=-250,in=-170] (-170:\rb) to [out=20,in=-250] (-160:\rc) arc (-160:160:\rc) to [out=250,in=-20] (170:\rb) to [out=170,in=250] (160:\ra);
\draw [very thick] (-170:\rb) to [out=20,in=-250] (-160:\rc);
\draw [very thick] (160:\rc) to [out=250,in=-20] (170:\rb);
\draw (170:\rb) arc (170: -170:\rb);
\draw [fill=black] (170:\rb) circle [radius=0.05];
\draw [fill=black] (-170:\rb) circle [radius=0.05];
\node [left] at (170:\rb) {$v_2\;\;$};
\node [left] at (-170:\rb) {$v_1\;\;$};
\end{scope}
\node at (0,0) {$\stackrel{{\rm fusion}}{\longrightarrow}$};
\begin{scope}[shift={(3,0)}]
\draw [fill=lightgray] (0,0) circle [radius=\ra];
\draw [fill=white] (0,0) circle [radius=\rc];
\draw [fill=black] (180:\ra) circle [radius=0.05];
\draw (180:\ra) to [out=80,in=180] (90:\rb) arc (90:-90:\rb) to [out=180,in=-80] (180:\ra);
\node [left] at (180:\ra) {$v'$};
\end{scope}
\end{tikzpicture}
\end{center}
The fused surface $(\Sigma', \{v'\})$ is an annulus with one marked point, and one has
\begin{gather*}
\pi_{r'} = \Lambda^R + \Lambda^L - \rho_{\sV} \big(\Mix^2(r_1)\big) = \Lambda^R + \Lambda^L + \sum_i y_i^R \wedge x_i^L \\
 \hphantom{\pi_{r'}}{} = \sum_i \frac{1}{2} x_i^R \wedge y_i^R + \frac{1}{2} x_i^L \wedge y_i^L + y_i^R \wedge x_i^L,
\end{gather*}
where $r_1= r_2 = \sum_i x_i \otimes y_i$.
\end{Example}

\section{Quasi Poisson geometry} \label{sec-quasipoisson}

\subsection{Quasi Poisson spaces and fusion of quasi Poisson spaces} \label{subsec-quasi}

Let $\g$ be a Lie algebra, $s \in (S^2\g)^\g$, and recall the element $\phi_s \in (\wedge^3 \g)^\g$ def\/ined in~\eqref{eq-cyb}. Recall from \cite{AA} that a {\it right $(\g, \phi_s)$-quasi Poisson space} is a triple $(Y, Q_Y, \rho)$, where $Y$ is a manifold, $\rho\colon \g \to \XX^1(Y)$ a right Lie algebra action, and $Q_Y \in \XX^2(Y)$ is a $\g$-invariant bivector f\/ield on~$Y$, such that
\begin{gather*}
[Q_Y, Q_Y] = \rho(\phi_s).
\end{gather*}
We denote by $\Q\P(\g, \phi_s)$ the category of right $(\g, \phi_s)$-quasi Poisson spaces, where the morphisms are $\g$-equivariant smooth maps respecting the quasi Poisson bivector f\/ields, and if $r \in \g \otimes \g$ is a quasitriangular $r$-matrix on~$\g$, denote by $\P(\g, r)$ the category of right $(\g, r)$-Poisson spaces, where the morphisms are $\g$-equivariant Poisson maps.

\begin{Proposition}[\cite{Anton-Yvette, David-Severa:quasi-Hamiltonian-groupoids, Lu-Mou:mixed}]\label{lem-equiv-quasi}
Let $\Lambda \in \wedge^2 \g$ such that $r = s + \Lambda$ is a quasitriangular $r$-matrix. Then one has an equivalence of categories
\begin{gather*}
\P(\g, r) \longleftrightarrow \Q\P(\g, \phi_s), \qquad (Y, \piY, \rho) \mapsto (Y, \piY - \rho(\Lambda), \rho), %\label{equiv-quasi}
\end{gather*}
where the functor on the level of morphisms is the identity map.
\end{Proposition}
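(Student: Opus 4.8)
The plan is to verify the three defining conditions of a $(\g, \phi_s)$-quasi Poisson space for the triple $(Y, \piY - \rho(\Lambda), \rho)$ whenever $(Y, \piY, \rho)$ is a $(\g, r)$-Poisson space, and then to check that the inverse assignment $(Y, Q_Y, \rho) \mapsto (Y, Q_Y + \rho(\Lambda), \rho)$ produces a $(\g, r)$-Poisson space, so that the two functors are mutually inverse; since both act as the identity on morphisms, once the objects are matched the equivalence is immediate. So first I would set $Q_Y = \piY - \rho(\Lambda)$ and compute $[Q_Y, Q_Y]$ using bilinearity of the Schouten bracket, expanding into $[\piY,\piY] - 2[\piY, \rho(\Lambda)] + [\rho(\Lambda), \rho(\Lambda)]$. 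Since $\piY$ is Poisson, $[\piY,\piY]=0$. For the cross term I would use the Poisson-action hypothesis $[\rho(x), \piY] = \rho(\delta_r(x))$ together with the explicit form of $\delta_r$ in terms of $r = \sum_i x_i \otimes y_i$; extending this to $[\rho(\Lambda),\piY]$ should express it in terms of $\rho$ applied to a Schouten-type combination. For the last term, $\rho$ is a Lie algebra morphism, so $[\rho(\Lambda), \rho(\Lambda)] = \rho([\Lambda,\Lambda])$, and the classical Yang--Baxter equation $[\Lambda,\Lambda] + \phi_s = 0$ turns this into $-\rho(\phi_s)$. Combining, the $s$-dependent and $\Lambda$-dependent pieces must collapse to give $[Q_Y, Q_Y] = \rho(\phi_s)$ exactly; this bookkeeping is the computational heart of the argument.

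Next I would check $\g$-invariance of $Q_Y$: one needs $[\rho(x), Q_Y] = [\rho(x), \piY] - [\rho(x), \rho(\Lambda)] = \rho(\delta_r(x)) - \rho([x,\Lambda]) = 0$ for all $x \in \g$, which amounts to the identity $\delta_r(x) = [x,\Lambda]$ in $\wedge^2\g$ when $s$ is $\g$-invariant. Indeed from the formula $\delta_r(x) = \sum_i [x,x_i]\otimes y_i + x_i\otimes[x,y_i] = \ad_x(r)$, and $\ad_x(r) = \ad_x(s) + \ad_x(\Lambda) = \ad_x(\Lambda) = [x,\Lambda]$ since $s \in (S^2\g)^\g$, this is exactly what is needed; I should be careful that $\ad_x(\Lambda)$, a priori living in $\g\otimes\g$, is automatically antisymmetric, which it is. That confirms $Q_Y$ is a well-defined quasi Poisson bivector, and $\rho$ is unchanged so it is still a right Lie algebra action. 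For the reverse direction, given a $(\g,\phi_s)$-quasi Poisson triple $(Y, Q_Y, \rho)$, I set $\piY = Q_Y + \rho(\Lambda)$ and run the same computation backwards: $[\piY,\piY] = [Q_Y,Q_Y] + 2[Q_Y,\rho(\Lambda)] + [\rho(\Lambda),\rho(\Lambda)] = \rho(\phi_s) + 2[Q_Y,\rho(\Lambda)] + \rho([\Lambda,\Lambda])$, and the $\g$-invariance of $Q_Y$ gives $[\rho(\Lambda), Q_Y] = 0$ (expanding $\Lambda = \tfrac12\sum x_i\wedge y_i$ into a sum of $[\rho(x_i),\,\cdot\,]$ applied to $Q_Y$), so $[\piY,\piY] = \rho(\phi_s + [\Lambda,\Lambda]) = 0$ by CYBE; and $[\rho(x),\piY] = [\rho(x),Q_Y] + [\rho(x),\rho(\Lambda)] = 0 + \rho([x,\Lambda]) = \rho(\delta_r(x))$, so $\rho$ is a Poisson action of $(\g,r)$.

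Finally I would remark that the two constructions are inverse to each other on the nose (adding and subtracting $\rho(\Lambda)$), that a $\g$-equivariant smooth map $f\colon (Y_1,\rho_1)\to(Y_2,\rho_2)$ intertwines $\piY$-structures if and only if it intertwines the $Q_Y$-structures, because $f$ intertwines the actions $\rho_1,\rho_2$ hence intertwines the correction terms $\rho_1(\Lambda)$ and $\rho_2(\Lambda)$ — so "respecting the (quasi-)Poisson bivector" is the same condition on both sides — and therefore the identity-on-morphisms assignments are well-defined functors that are mutually quasi-inverse, indeed strictly inverse. The main obstacle I anticipate is purely organizational: correctly handling the Schouten brackets between $\rho$-pushforwards of elements of $\wedge^\bullet\g$ and making sure the cross term $[\rho(\Lambda),\piY]$ is evaluated consistently with the sign conventions in the definitions of $\delta_r$ and $\phi_s$; once the convention that $[\rho(x),\piY]=\rho(\delta_r(x))$ with $\delta_r(x)=\ad_x(r)$ is pinned down, everything else follows from CYBE and $\g$-invariance of $s$ by a short computation, so there is no deep difficulty beyond careful bookkeeping.
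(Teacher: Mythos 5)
Your proposal is correct and follows essentially the same route as the paper's proof: expand the Schouten bracket of $\piY-\rho(\Lambda)$ with itself, use the Poisson-action condition for the cross term, $[\rho(\Lambda),\rho(\Lambda)]=\rho([\Lambda,\Lambda])$, the classical Yang--Baxter equation, and the $\g$-invariance of $s$ (so that $\delta_r(x)=[x,\Lambda]$) for the invariance check, then run the same computation backwards for the inverse assignment, with the identity map on morphisms. The one step you leave as ``bookkeeping,'' namely that the cross term evaluates to $[\rho(\Lambda),\piY]=\rho([\Lambda,\Lambda])$, is precisely the identity the paper also uses (tacitly) in its two-line computation, so there is no gap in substance.
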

\begin{proof}
Let $(Y, \piY, \rho) $ be a right $(\g, r)$-Poisson space. Then
\begin{gather*}
[\piY - \rho(\Lambda), \piY - \rho(\Lambda)] = - 2[\rho(\Lambda), \piY] + \rho([\Lambda, \Lambda]) = -\rho([\Lambda, \Lambda]) = \rho(\phi_s),
\end{gather*}
and for $x \in \g$,
\begin{gather*}
[\rho(x), \piY - \rho(\Lambda)] = \rho(\delta_r(x) - [x, \Lambda]) = 0,
\end{gather*}
hence $(Y, \piY - \rho(\Lambda), \rho)$ is a right $(\g, \phi_s)$-quasi Poisson space. One similarly checks that if $(Y, Q_Y, \rho)$ is a $(\g, \phi_s)$-quasi Poisson space, $(Y, Q_Y + \rho(\Lambda), \rho)$ is a right $(\g, r)$-Poisson space.
\end{proof}

\subsection[A canonical quasi Poisson structure on $\A(\Sigma, V)$]{A canonical quasi Poisson structure on $\boldsymbol{\A(\Sigma, V)}$} \label{subsec-quasip}

Let $G$ be a connected complex Lie group, $\g$ its Lie algebra, and let $s \in (S^2\g)^\g$. Let~$(\Sigma, V)$ be a~marked surface and for $v \in V$, let $\Lambda_v \in \wedge^2 \g$ be such that $r_v = s + \Lambda_v$ is quasitriangular $r$-matrix, and let $r \in \g^{\sV} \otimes \g^{\sV}$ be as in~\eqref{small-lie-bialg}. By Proposition~\ref{lem-equiv-quasi},
\begin{gather*}
\bigg(\A(\Sigma, V), \, Q_s: = \pi_r - \rho_{\sV} \bigg( \sum_{v \in V} (\Lambda_v)_v \bigg), \, \rho_{\sV} \bigg)
\end{gather*}
is a right $(\g, \phi_s)^{\sV}$-quasi Poisson space.

\begin{Proposition}[see also \cite{LBS1}] \label{coord-quasi}
For any oriented skeleton $\Gamma$ of $(\Sigma, V)$, one has
\begin{gather}
I_\Gamma^{-1}(Q_s) = -\sigma_{\sGam} \bigg( \sum_{v \in V} \big(\Mix^{\sGam_v}(s)\big)_v \bigg), \label{form-coord-quasi}
\end{gather}
where for $v \in V$, $\Mix^{\sGam_v}(s) \in \wedge^2 (\g^{\sGam_v}) \cong \wedge^2 (\g^{|\sGam_v |})$ is the element defined in \eqref{Mix^n(r)} using the linear order of $\Gamma_v$. In particular, $Q_s$ depends only on $s \in (S^2\g)^\g$.
\end{Proposition}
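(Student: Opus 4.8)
The plan is to unwind the definition of $Q_s$ through the identification $I_\Gamma$ and reduce everything to an identity about tensors in $\g^{\sGam_{1/2}}$, which can then be verified termwise. Recall that $Q_s = \pi_r - \rho_\sV\big(\sum_{v\in V}(\Lambda_v)_v\big)$ and that $\pi_r = I_\Gamma(\pi_\sGam) = I_\Gamma\circ\sigma_\sGam(r_\sGam)$. By the relation \eqref{annoyingrelation}, $\rho_\sV = I_\Gamma\circ\sigma_\sGam\circ\diag_\sGam$, so applying $I_\Gamma^{-1}$ gives
\begin{gather*}
I_\Gamma^{-1}(Q_s) = \sigma_\sGam\bigg( r_\sGam - \diag_\sGam\Big(\sum_{v\in V}(\Lambda_v)_v\Big)\bigg).
\end{gather*}
Thus the statement \eqref{form-coord-quasi} will follow if I can show the purely algebraic identity
\begin{gather*}
r_\sGam - \diag_\sGam\Big(\sum_{v\in V}(\Lambda_v)_v\Big) + \sum_{v\in V}\big(\Mix^{\sGam_v}(s)\big)_v = s_\sGam,
\end{gather*}
where $s_\sGam = \sum_{\gamma\in\Gamma_1}(s)_{\alpha_\gamma} - (s)_{\calpha_\gamma}$ is the symmetric part of $r_\sGam$ computed in the proof of Theorem~\ref{main-thm}, since $\sigma_\sGam(s_\sGam) = 0$ by \eqref{sigma}. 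Actually it suffices that the left-hand side lie in the kernel of $\sigma_\sGam$; showing it equals $s_\sGam$ is the cleanest way.

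Since $\g^{\sGam_{1/2}} = \bigoplus_{v\in V}\g^{\sGam_v}$ and every term above decomposes as a direct sum over $v\in V$ of its $v$-component, it is enough to check the identity one vertex at a time. Fix $v\in V$ and set $n = |\Gamma_v|$, identifying $\g^{\sGam_v}\cong\g^n$ via the linear order on $\Gamma_v$; write $r_v = s + \Lambda_v$. The $v$-component of $r_\sGam$ is $r_v^{(\varepsilon_v,\sGam_v)} = r_v^{\varepsilon_v,n} - \Mix^n(r_v)$ by \eqref{defn-r^(n)}, whose symmetric part is $(\varepsilon_v(1)s,\dots,\varepsilon_v(n)s)$ and whose antisymmetric part is $\Lambda_{r_v}^{(n)}$. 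The $v$-component of $\diag_\sGam(\sum_w(\Lambda_w)_w)$ is $\diag_n(\Lambda_v)$. By Lemma~\ref{lem-diag_nLambda}, $\Lambda_{r_v}^{(n)} - \diag_n(\Lambda_v) = -\Mix^n(s)$, so the antisymmetric parts of $r_v^{(\varepsilon_v,\sGam_v)} - \diag_n(\Lambda_v) + \Mix^n(s)$ cancel exactly, and what remains is the symmetric part $(\varepsilon_v(1)s,\dots,\varepsilon_v(n)s) = \sum_{\alpha\in\Gamma_v}\varepsilon_v(\alpha)(s)_\alpha$. Summing over $v\in V$ and recalling that $\varepsilon_v(\alpha) = 1$ for source half edges and $-1$ for end half edges, this reassembles precisely into $\sum_{\gamma\in\Gamma_1}(s)_{\alpha_\gamma} - (s)_{\calpha_\gamma} = s_\sGam$, which is annihilated by $\sigma_\sGam$.

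Finally, for the ``in particular'' clause: the right-hand side of \eqref{form-coord-quasi} manifestly depends only on $s$ — the $r$-matrices $r_v$ and their antisymmetric parts $\Lambda_v$ have disappeared — and independence of the choice of $\Gamma$ follows because $Q_s$ was defined intrinsically on $\A(\Sigma,V)$ (via $\pi_r$, which is $\Gamma$-independent by Theorem~\ref{main-thm-2}, and $\rho_\sV$, which makes no reference to $\Gamma$). I expect no genuine obstacle here; the only point requiring care is bookkeeping with the sign function $\varepsilon_v$ and the source/target conventions, so that the per-vertex symmetric contributions glue correctly into the edgewise expression $s_\sGam$. The invocation of Lemma~\ref{lem-diag_nLambda} is the crux that makes the antisymmetric parts collapse to exactly the $-\Mix^n(s)$ being added back.
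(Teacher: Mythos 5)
Your proof is correct and follows essentially the same route as the paper: both reduce via \eqref{annoyingrelation} to computing $\sigma_\sGam$ of an element of $\g^{\sGam_{1/2}}\otimes\g^{\sGam_{1/2}}$ and then apply Lemma~\ref{lem-diag_nLambda} vertex by vertex. The only cosmetic difference is that the paper discards the symmetric part of $r_\sGam$ at the outset (replacing $r_\sGam$ by $\Lambda_\sGam$, since $\sigma_\sGam(s_\sGam)=0$), whereas you carry $s_\sGam$ through the computation and annihilate it at the end.
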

\begin{proof} Let $\Lambda_\sGam \in \wedge^2 \g^{\sGam_{1/2}}$ be the anti-symmetric part of the quasitriangular $r$-matrix $r_\sGam$ in~\eqref{defn-r_sgam} and recall the map $\diag_\sGam\colon \g^\sV \to \g^{\sGam_{1/2}}$ in~\eqref{defn-Delta}. By Lemma~\ref{lem-diag_nLambda}, one has
\begin{gather*}
I_\Gamma^{-1}(Q_s) = \sigma_{\sGam} \bigg( \Lambda_\sGam - \diag_{\sGam} \bigg(\sum_{v \in V} (\Lambda_v)_v \bigg) \bigg) = -\sigma_{\sGam} \bigg( \sum_{v \in V} (\Mix^{\sGam_v}(s))_v \bigg).\tag*{\qed}
\end{gather*}\renewcommand{\qed}{}
\end{proof}

\begin{Remark} Formula \eqref{form-coord-quasi} appears in \cite[Section~4]{LBS1}, where $Q_s$ was def\/ined via fusion of quasi-Poisson spaces.

Note that both $\pi_r$ and $Q_s$ descend to the same Poisson structure on the moduli space $\A(\Sigma, V)/G^V$ of f\/lat $G$-connections over $\Sigma$.
\end{Remark}

\begin{Example}Let $(\Sigma', \{v'\})$ be the annulus with one marked point in Example \ref{exa-fused-disk}, and let the edge of its skeleton $\Gamma'$ be oriented in the anti-clockwise direction. Identify $\A(\Sigma', \{v'\}) \cong G$ via $I_{\Gamma'}$ and $\g^{\Gamma'_{1/2}} \cong \g^2$. Writing $r = \sum_i x_i \otimes y_i$, one has $2s = \sum_i (x_i \otimes y_i + y_i \otimes x_i)$, thus
\begin{align*}
Q_s & =- \sigma_{\sGam'}\big(\Mix^2(s)\big) = \frac{1}{2} \sum_i x_i^R \wedge y_i^L + y_i^R \wedge y_i^L.
\end{align*}
Let $\rho$ be the (right) action of $G$ on itself by conjugation. The right $(\g, \phi_s)$-quasi Poisson space $(G, Q_s, \rho)$ is an example of the Hamiltonian quasi-Poisson spaces which were studied in \cite{AA}.
\end{Example}

\subsection*{Acknowledgements}

The author wishes to thank Jiang-Hua Lu, Marco Gualtieri and Francis Bischof\/f for their helpful comments. The author also wishes to thank the anonymous referees and editors, whose suggestions helped improve this paper.

%\cite{KSY,LBS2}
\pdfbookmark[1]{References}{ref}
\LastPageEnding

\end{document}